\documentclass[12pt, reqno]{amsart}
\usepackage{amsmath, amsthm, amscd, amsfonts, amssymb, graphicx, color}
\usepackage{mathrsfs}
\usepackage{newtxtext}
\usepackage[bookmarksnumbered, colorlinks, plainpages]{hyperref}
\hypersetup{colorlinks=true,linkcolor=red, anchorcolor=green, citecolor=cyan, urlcolor=red, filecolor=magenta, pdftoolbar=true}

\newtheorem{theorem}{Theorem}[section]
\newtheorem{lemma}[theorem]{Lemma}

\newtheorem{cor}[theorem]{Corollary}

\theoremstyle{remark}
\newtheorem{remark}[theorem]{\bf{Remark}}
\numberwithin{equation}{section}
\allowdisplaybreaks
\begin{document}

\title [Improved upper bounds of Berezin number \ldots]{Improved upper bounds for the Berezin numbers of operators on reproducing kernel Hilbert spaces}
\author[R. Birbonshi, S. Ghosh, F. Kittaneh S. Mahapatra, S. Ojha,  ]{ Riddhick Birbonshi, Sumon Ghosh, Fuad Kittaneh, Saikat Mahapatra, Sarita Ojha }
	\address[Birbonshi] {Department of Mathematics, Jadavpur University, Kolkata 700032, West Bengal, India}
\email{riddhick.math@gmail.com}

\address[Ghosh]{Department of Mathematics, Indian Institute of Engineering Science and Technology, Shibpur, 711103, West Bengal, India}
\email{isumonghoshmath@gmail.com}

\address[Kittaneh] {Department of Mathematics, The University of Jordan, Amman, Jordan, and Department of Mathematics, Korea University, Seoul 02841, South Korea}
\email{fkitt@ju.edu.jo}

\address[Mahapatra]{Department of Mathematics, Jadavpur University, Kolkata 700032, West Bengal, India}
\email{smpatra.lal2@gmail.com}
	
\address[Ojha]{Department of Mathematics, Indian Institute of Engineering Science and Technology, Shibpur, 711103, West Bengal, India}
\email{sarita.ojha89@gmail.com}

\subjclass[2020]{47A30, 47A63, 47B15.}

\keywords{Berezin number, Berezin norm, Mean, Reproducing kernel Hilbert space, Orlicz function.}

\begin{abstract} In this article, several upper bounds for the Berezin numbers of bounded linear operators on reproducing kernel Hilbert spaces are obtained through the use of interpolation paths of symmetric means and Orlicz functions. With suitable selections of these paths and functions, we show that
 the results presented here refine and generalize several earlier known findings. Furthermore, we derive some Berezin number inequalities for such operators using refined Young's inequalities.

\end{abstract}
\maketitle

\section{Introduction}
 Let $X$ be a non-empty set. A reproducing kernel Hilbert space $\mathcal{H}=\mathcal{H}(X)$ is a Hilbert space of complex valued functions on the set $X$, equipped with an inner product $\langle \cdot, \cdot\rangle$ and associated norm $\|\cdot\|$, with the property that, for every $x\in X$, the evaluation functional 
$\phi\rightarrow\phi(x)$ is continuous on 
$\mathcal{H}$ (see \cite{paulsen}).
  By Riesz representation theorem for each $x\in X$, there exists a unique element $k_x\in\mathcal{H}$ such that $\phi(x)=\left\langle\phi,k_x\right\rangle$ for all $\phi\in\mathcal{H}$. The collection of functions $\{k_x:x\in X\}$ is called the set of all reproducing kernels of $\mathcal{H}$ and $\{\hat{k}_x=k_x/\|k_x\|:x\in X\}$ is the set of all normalized reproducing kernels of $\mathcal{H}$. 
\par Let $A\in\mathscr{B}(\mathcal{H})$, where $\mathcal{H}$ is a reproducing kernel Hilbert space, then the Berezin transform of $A$ (see \cite{FelixA,Quantization}) is the function $\Tilde{A}$ on $X$ defined by 
 \begin{equation*}
  \Tilde{A}(x)=\big\langle A\hat{k}_x,\hat{k}_x\big\rangle~\text{for all}~x\in X.    
 \end{equation*}
The Berezin set, Berezin number and Berezin norm of $A$ (see \cite{New estimations,invertibility}) are denoted by $\textbf{Ber}(A),\textbf{ber}(A)$ and $\|A\|_\textbf{Ber}$, respectively, and defined as 
\begin{eqnarray*}
    \textbf{Ber}(A)&:=&\{\Tilde{A}(x):x\in X\},\\
    \textbf{ber}(A)&:=&\sup\{|\Tilde{A}(x)|:x\in X\}
\end{eqnarray*}
and 
\begin{equation*}
    \|A\|_\textbf{ber}:=\sup\left\{\big|\big\langle A\hat{k}_x,\hat{k}_y\big\rangle\big|:x,y\in X\right\}.
\end{equation*}
The Berezin number of an operator $A\in\mathscr{B}(\mathcal{H})$ satisfies the following properties:
\begin{enumerate}
    \item $\textbf{ber}(A) \leq \|A\|_\textbf{ber}\leq\|A\|$.
    \item $\textbf{ber}(\alpha A)=|\alpha|\textbf{ber}(A)$ for all $\alpha\in\mathbb{C}$.
    \item $\textbf{ber}(A+B)\leq\textbf{ber}(A)+\textbf{ber}(B)$ for all $A,B\in\mathscr{B}(\mathcal{H})$.
\end{enumerate}
$A\in \mathscr{B}(\mathcal{H})$ is called a positive operator if $\langle A x,x \rangle\ge0$ for all $x\in \mathcal{H}$ and it is denoted by $A\ge0$. For $A\in \mathscr{B}(\mathcal{H})$, $|A|$ stands for the positive operator $(A^*A)^{\frac{1}{2}}$.
If $A$ is positive, then $\|A\|_\textbf{ber}=\textbf{ber}(A)$ (see \cite{anirb}). A reproducing kernel Hilbert space possesses the ``Ber property'', if the Berezin transform completely characterizes the operator (see \cite{Karaev}). It is well known that the Hardy-Hilbert space is a reproducing kernel Hilbert space which has the ``Ber property'' (see \cite{Zhu}). Recall that, the Hardy-Hilbert space on the unit disc $\mathbb{D}$ is denoted by $H^2(\mathbb{D})$, and is defined as 
\begin{equation*}
 H^2(\mathbb{D})=\left\{\phi:\phi(z)=\sum\limits_{n=0}^{\infty}a_nz^n ~\text{with}~\sum\limits_{n=0}^{\infty}|a_n|^2~\text{is finite}\right\},  
\end{equation*}
(see \cite{martinez2007introduction}). Here we note that the Berezin number generally does not define a norm on $\mathscr{B}(\mathcal{H})$, unless $\mathcal{H}$ possesses the ``Ber property''. Over the years, several mathematicians have studied the Berezin number and Berezin norm inequalities of reproducing kernel Hilbert space operators (see \cite{garayev2021inequalities,hajmohamadi2020improvements,saikat,saikat2,majee2023numerical,yamanci2017numerical,yamanci2022further}).\\
A function $\sigma:[0,\infty)\times[0,\infty)\rightarrow[0,\infty)$ is said to
be mean if it fulfills the following properties (see \cite{bhatia06,Bhatia}):
\begin{enumerate}
    \item $\sigma(a,b)\geq 0$;
    \item $a\leq\sigma(a,b)\leq b$ if $a\leq b$;
    \item $\sigma(a,b)$ is monotone increasing in both $a$ and $b$;
    \item $\sigma(\alpha a,\alpha b)=\alpha\sigma(a,b)$ for all $\alpha>0$;
    \item $\sigma(a,b)$ is continuous in $a$ and $b$.
\end{enumerate}
If $\sigma(a,b)=\sigma(b,a)$, then $\sigma$ is called symmetric mean. Further, if $\sigma$ is a symmetric mean, and if for each $\lambda\in[0, 1]$, $\sigma_\lambda$ is a mean defined on $[0,\infty)\times[0,\infty)$ such that
\begin{enumerate}
    \item $\sigma_0(a,b)=a, \sigma_1(a,b)=b,\sigma_\frac{1}{2}(a,b)=\sigma(a,b)$ for all $a,b\geq0$;
    \item For all $\lambda,\mu\in[0,1],$ we have $\sigma\big(\sigma_\lambda(a,b),\sigma_\mu (a,b)\big)=\sigma_\frac{\lambda+\mu}{2}(a,b)$;
    \item For each $0\leq\lambda\leq1$, $\sigma_\lambda$ is increasing in each of its components;
\end{enumerate}
then $\sigma_\lambda$ is called an interpolational path for $\sigma$. To learn more about interpolational path, see \cite[Section 5.3]{atanu}. Here we write $a\sigma b$ instead of $\sigma(a,b)$.\\
It can be easily demonstrated that the set of all $t\in [0, 1]$ satisfying the following equation:
\begin{equation}\label{meaneq}
   (a\sigma_\alpha b)\sigma_t(a\sigma_\beta b)=a\sigma_{(1-t)\alpha+t\beta}b
\end{equation}
for every $\alpha,\beta\in [0, 1]$ and every $a, b \geq 0$, forms a convex subset of $[0, 1]$, which includes both $0$ and $1$. Consequently, the equality \eqref{meaneq} holds for all  $\alpha,\beta, t\in [0, 1]$.
If $a,b$ are two non-negative numbers, then their arithmetic mean is defined as $a\nabla b=\frac{a+b}{2}$. This mean is symmetric. An interpolation path for this mean can be written as $a\nabla_\lambda b=(1-\lambda)a+\lambda b,~0\leq\lambda\leq 1$.
For $a,b\geq 0$, the geometric mean is defined by $a\#b=\sqrt{ab}$. The corresponding interpolational path for $\#$ is $a\#_\lambda b=a^{1-\lambda}b^\lambda,~0\leq\lambda\leq 1$.
It is well known that for each $0 \le \lambda\le 1$, we have
$a\#_\lambda b\le a\nabla_\lambda b$.
 For more about the mean theory, we refer to \cite{Fujii,Furuchi}.  Recently, in \cite{mean1}, Bakherad et al. introduced several generalizations of the refined Cauchy–Schwarz inequality by employing arbitrary means. Utilizing these generalizations, several  bounds for the numerical radius and the Berezin number were subsequently derived in \cite{mean1,mean2}.
 
\par A map $f:[0,\infty)\rightarrow [0,\infty)$ which is continuous, convex, non-decreasing, $f(0)=0$ and $f(u)\rightarrow\infty$ as $u\rightarrow\infty$ is called Orlicz function (see \cite{comorlicz}). If $f(u)>0$ for all $u>0$, then the Orlicz function $f$ is called non-degenerate and if 
$f(u)=0$ for some $u>0$, then $f$ is called a degenerate Orlicz function. In this article, we consider the non-degenerate Orlicz functions. The integral representation of an Orlicz function is 
\begin{equation*}
    f(u)=\int\limits_0^up(t)dt,
\end{equation*} here $p$ is a non-decreasing function with $p(0)=0, \ p(t)>0$ for $t>0$ and $\lim\limits_{t\rightarrow\infty}p(t)=\infty$. The function $p$ is known as kernel of $f$. If $q$ be the right inverse of $p$, then $q$ is defined as $q(s)=\sup\{t:p(t)\leq s\}, ~ s\geq0$. Here, $q$ satisfies similar properties to $p$. Complementary Orlicz function $g$ to $f$ is defined by
\begin{equation*}
   g(v)=\int\limits_0^v q(s)ds.
\end{equation*}
The pair $(f,g)$ is called mutually complementary Orlicz functions. Recently, Gürdal et al. derived several Berezin number inequalities for bounded linear operators using Orlicz functions (see \cite{Gürdal}).

The purpose of this paper is to derive new upper bounds for the Berezin number of bounded linear operators on reproducing kernel Hilbert spaces. In Section \ref{sec2}, we review some well-known inequalities that are crucial in our subsequent proofs. Section \ref{sec3} is devoted to establishing several new upper bounds for the Berezin number by employing interpolation paths of symmetric means along with Orlicz functions. Through suitable choices of these paths and functions, we show that our results refine and generalize several existing results of  
\cite{pintuacta,hajmohamadi2020improvements,Taghavi}.  Lastly, Section \ref{sec4} outlines further refined Berezin number inequalities by applying improved versions of Young’s inequality, which lead to improvements of certain results previously obtained in \cite{esti,Taghavi}.

\section{\textbf{Preliminaries}}\label{sec2}

We begin this section with some auxiliary results that will be used repeatedly to obtain our main results. The first lemma is on complementary Orlicz function. 

\begin{lemma} \cite{comorlicz}\label{orlizineq} Let $f$ and $g$ be two complementary Orlicz functions. Then for any $a,b\ge 0$, the following inequality holds
\begin{eqnarray*}
   ab\le f(a)+g(b). 
\end{eqnarray*}
     
\end{lemma}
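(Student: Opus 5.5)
This is Young's inequality for complementary Orlicz functions, and I will prove it by comparing areas. Write $f(a)=\int_0^a p(t)\,dt$ and $g(b)=\int_0^b q(s)\,ds$, where $q$ is the right inverse of $p$. Both $p$ and $q$ are non-decreasing and non-negative. The case $a=0$ or $b=0$ is trivial, since $f,g\ge 0$. So assume $a,b>0$.

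The key step is the pointwise set inclusion
\begin{equation*}
[0,a]\times[0,b]\subseteq \{(t,s): 0\le t\le a,\ 0\le s\le p(t)\}\cup\{(t,s): 0\le s\le b,\ 0\le t\le q(s)\},
\end{equation*}
up to a set of planar measure zero. To check it, take $(t,s)$ in the rectangle with $s>p(t)$. By the definition $q(s)=\sup\{\tau: p(\tau)\le s\}$, the point $t$ belongs to the set whose supremum is taken, so $t\le q(s)$. Hence $(t,s)$ lies in the second region.

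Taking two-dimensional Lebesgue measure gives $ab\le \int_0^a p(t)\,dt+\int_0^b q(s)\,ds=f(a)+g(b)$, by Fubini/Tonelli applied to each region (the region under a graph of a non-negative measurable function). Monotonicity of $p$ and $q$ guarantees the needed measurability.

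The only delicate point is the inclusion near discontinuities or flat parts of $p$, where $q$ may jump. The sup-definition of $q$ handles this directly, since only $p(t)\le s \Rightarrow t\le q(s)$ is needed, which is immediate. Finiteness of $q(s)$ follows from $p(t)\to\infty$.
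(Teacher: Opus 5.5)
The paper gives no proof of this lemma; it quotes it from \cite{comorlicz}. Your area-comparison argument is correct and complete, and it is the classical proof of Young's inequality for a complementary pair:
\begin{itemize}
\item the implication $p(t)\le s\Rightarrow t\le q(s)$ is immediate from $q(s)=\sup\{\tau: p(\tau)\le s\}$;
\item Tonelli gives the two areas as $\int_0^a p(t)\,dt$ and $\int_0^b q(s)\,ds$;
\item subadditivity of Lebesgue measure then finishes the proof.
\end{itemize}

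One small correction to your write-up: the hedge ``up to a set of planar measure zero'' is unnecessary, because the inclusion holds exactly. Every point of the rectangle satisfies either $s\le p(t)$, which puts it in the first region, or $s>p(t)$, which forces $t\le q(s)$ and puts it in the second.

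A common alternative is convex duality, identifying $g(b)$ with $\sup_{a\ge 0}\{ab-f(a)\}$. Your route works directly from the paper's definition of $g$ through the right inverse $q$, so it avoids that identification. It also uses nothing about $f$ beyond its integral representation: monotonicity of $p$ gives measurability, and $p(t)\to\infty$ gives finiteness of $q$.
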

The second lemma is known as mixed Schwarz inequality.
\begin{lemma}\cite{furuta pro}\label{txy^2}
    Let $A\in \mathscr{B}(\mathcal{H})$ and let $u,v\in \mathcal{H}$. Then 
    \begin{equation*}
        |\langle Au, v\rangle|^2\leq \langle|A|^{2\alpha}u,u\rangle\langle|A^*|^{2(1-\alpha)}v,v\rangle
    \end{equation*}
    for all $0\leq \alpha \leq 1.$
\end{lemma}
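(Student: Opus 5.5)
The mixed Schwarz inequality in Lemma~\ref{txy^2} is trivial at $\alpha=0$ and $\alpha=1$. For interior $\alpha$, I would prove it via the polar decomposition $A=U|A|$, where $U$ is a partial isometry with $U^*U$ the projection onto $\overline{\operatorname{ran}}|A|$. The plan is to split $|A|=|A|^{\alpha}|A|^{1-\alpha}$ and move one factor onto $v$, so that
\begin{equation*}
\langle Au,v\rangle=\langle U|A|u,v\rangle=\langle |A|^{\alpha}u,\;|A|^{1-\alpha}U^*v\rangle .
\end{equation*}
The ordinary Cauchy--Schwarz inequality then gives
\begin{equation*}
|\langle Au,v\rangle|^2\le \big\||A|^{\alpha}u\big\|^2\,\big\||A|^{1-\alpha}U^*v\big\|^2
=\langle |A|^{2\alpha}u,u\rangle\,\langle U|A|^{2(1-\alpha)}U^*v,v\rangle .
\end{equation*}
The first factor is already in the required form, since $|A|^{\alpha}$ is positive.

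It remains to identify $U|A|^{2(1-\alpha)}U^*$ with $|A^*|^{2(1-\alpha)}$. First I compute $|A^*|^2=AA^*=U|A|^2U^*$. Next I check that $V:=U|A|U^*$ is positive and satisfies
\begin{equation*}
V^2=U|A|U^*U|A|U^*=U|A|^2U^*,
\end{equation*}
using $U^*U|A|=|A|$. Uniqueness of positive square roots then gives $|A^*|=U|A|U^*$. The same computation shows that $p(|A^*|)=U\,p(|A|)\,U^*$ for every polynomial $p$ with $p(0)=0$.

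Passing from polynomials to the continuous function $t\mapsto t^{r}$ with $r=2(1-\alpha)>0$ is the main technical obstacle. My first attempt would be to approximate $t^r$ uniformly on $[0,\|A\|]$ by polynomials vanishing at $0$, which is possible by Weierstrass because $t^r$ vanishes at $0$. The continuous functional calculus then yields $|A^*|^{r}=U|A|^{r}U^*$. Substituting this into the Cauchy--Schwarz bound above completes the argument.
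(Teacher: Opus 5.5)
The paper gives no proof of Lemma~\ref{txy^2}; it quotes the lemma from the literature, so there is no in-paper argument to compare yours with. Your proof is correct and complete. The individual steps all hold:
\begin{itemize}
\item the splitting $\langle Au,v\rangle=\langle |A|^{\alpha}u,\,|A|^{1-\alpha}U^*v\rangle$;
\item the identification $|A^*|=U|A|U^*$, via uniqueness of positive square roots and $U^*U|A|=|A|$;
\item the extension $p(|A^*|)=U\,p(|A|)\,U^*$ to polynomials with $p(0)=0$;
\item the uniform approximation of $t^{r}$ on $[0,\|A\|]$, an interval containing the spectra of both $|A|$ and $|A^*|$ since $\||A|\|=\||A^*|\|=\|A\|$.
\end{itemize}
Together these give $|A^*|^{r}=U|A|^{r}U^*$ for every $r>0$. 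You were right to insist on $p(0)=0$. For a general continuous $h$ one only has $U\,h(|A|)\,U^*=h(|A^*|)-h(0)(I-UU^*)$, which is why the endpoint $\alpha=1$ (where $|A^*|^{0}=I$) falls outside the identity; there the claim is just Cauchy--Schwarz, as you noted.

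On the choice of route: the inequality is classically obtained as the special case $P=|A|$, $Q=|A^*|$ of the Heinz--Kato inequality. That inequality states $|\langle Au,v\rangle|\le\|P^{\alpha}u\|\,\|Q^{1-\alpha}v\|$ for positive $P,Q$ with $\|Au\|\le\|Pu\|$ and $\|A^*v\|\le\|Qv\|$. It is equivalent to the L\"owner--Heinz inequality, so its usual proofs go through operator monotonicity of $t\mapsto t^{\alpha}$ or an interpolation argument. Your polar-decomposition argument avoids this and uses only the continuous functional calculus, at the price of being tied to $P=|A|$, $Q=|A^*|$. In return it extends almost for free. Take nonnegative continuous $f,g$ on $[0,\infty)$ with $f(t)g(t)=t$, and apply the identity above with $h=g^2\ge 0$, which gives $U\,g(|A|)^2\,U^*\le g(|A^*|)^2$. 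The same steps then yield $|\langle Au,v\rangle|^2\le\langle f(|A|)^2u,u\rangle\,\langle g(|A^*|)^2v,v\rangle$.
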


The next lemma is given as:
\begin{lemma}\cite{esti}\label{estimation1}
 If $u,v,e$ are vectors in $\mathcal{H}$ and $\langle e,e \rangle=1$, then 
 \begin{equation*}
     |\langle u,e\rangle\langle e,v\rangle
     |^r\leq \frac{2\zeta+1}{2\zeta+2}\|u\|^r\|v\|^r+\frac{1}{2\zeta+2}|\langle u,v\rangle|^r,
 \end{equation*}
 for any $\zeta\geq 0$ and $r\geq 1$.
\end{lemma}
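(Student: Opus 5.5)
The plan is to reduce the statement to Buzano's inequality, then use convexity of $t\mapsto t^r$, and finally shift weight between the two terms using the Cauchy--Schwarz inequality. The parameter $\zeta$ only controls how much weight is moved, so the case $\zeta=0$ is the core estimate and the general case follows from it.

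\emph{Step 1 (Buzano's inequality).} Let $P$ be the rank-one orthogonal projection $Px=\langle x,e\rangle e$; this uses $\langle e,e\rangle=1$. Then
\[
\langle u,e\rangle\langle e,v\rangle=\langle Pu,v\rangle .
\]
Hence
\[
\langle u,e\rangle\langle e,v\rangle-\tfrac12\langle u,v\rangle=\tfrac12\langle (2P-I)u,v\rangle .
\]
The operator $U=2P-I$ is a self-adjoint unitary (a reflection), so $\|U\|=1$. The Cauchy--Schwarz inequality then gives
\[
\big|\langle u,e\rangle\langle e,v\rangle-\tfrac12\langle u,v\rangle\big|\le \tfrac12\|u\|\|v\|.
\]
By the triangle inequality,
\[
|\langle u,e\rangle\langle e,v\rangle|\le \tfrac12\big(\|u\|\|v\|+|\langle u,v\rangle|\big).
\]

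\emph{Step 2 (convexity).} Set $a=\|u\|\|v\|$ and $b=|\langle u,v\rangle|$. For $r\ge1$ the function $t\mapsto t^r$ is non-decreasing and convex on $[0,\infty)$. Applying it to Step 1 gives
\[
|\langle u,e\rangle\langle e,v\rangle|^r\le \Big(\frac{a+b}{2}\Big)^r\le \tfrac12 a^r+\tfrac12 b^r .
\]
This is exactly the claimed inequality in the case $\zeta=0$.

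\emph{Step 3 (reweighting).} Put $w=\frac{2\zeta+1}{2\zeta+2}$. Then $w\in[\tfrac12,1)$ and $1-w=\frac{1}{2\zeta+2}$. By the Cauchy--Schwarz inequality $b\le a$, hence $b^r\le a^r$. Therefore
\[
\Big(w a^r+(1-w)b^r\Big)-\Big(\tfrac12 a^r+\tfrac12 b^r\Big)=\big(w-\tfrac12\big)(a^r-b^r)\ge 0 .
\]
Combining this with Step 2 yields the statement for every $\zeta\ge0$.

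The only non-routine ingredient is Step 1. Its key point is recognizing that $2P-I$ is an isometry, which is what gives the constant $\tfrac12$. The remaining steps are elementary.
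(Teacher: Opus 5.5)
The paper gives no proof of this lemma; it is quoted from \cite{esti}, so there is no in-text argument to compare with. Your proof is correct and complete. The identity $\langle u,e\rangle\langle e,v\rangle-\tfrac12\langle u,v\rangle=\tfrac12\langle(2P-I)u,v\rangle$ with $\|2P-I\|=1$ is the standard proof of Buzano's inequality, and Jensen's inequality for $t\mapsto t^r$ then gives the case $\zeta=0$. The reweighting step is valid because $\frac{2\zeta+1}{2\zeta+2}\ge\frac12$ and $|\langle u,v\rangle|^r\le\|u\|^r\|v\|^r$.

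The source probably argues differently. The paper's closing remark quotes \cite[Theorem 2.17]{esti}, whose constants have the form of Dragomir's extension of Buzano's inequality. That extension uses the same projection trick with $\lambda P-I$, whose norm is $\max\{1,|\lambda-1|\}$, and gives
\[
|\langle u,e\rangle\langle e,v\rangle|\le\frac{1}{|\lambda|}\Big(\max\{1,|\lambda-1|\}\,\|u\|\|v\|+|\langle u,v\rangle|\Big).
\]
Taking $\lambda=2\zeta+2$ yields the weights $\frac{2\zeta+1}{2\zeta+2}$ and $\frac{1}{2\zeta+2}$ in one step, and convexity of $t^r$ finishes the proof.

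That route produces the whole $\zeta$-family directly. Yours makes something more visible: every $\zeta>0$ case of the lemma is a weakening of the $\zeta=0$ case. So the parameter $\zeta$ adds no strength to the lemma itself, which is worth keeping in mind when reading the $\zeta$-dependent bounds later in the paper.
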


The fourth lemma is called as the classical Jensen's inequality and the spectral theorem for positive operators.
\begin{lemma}\cite{atanu}\label{vvvcc}
Let $A\in\mathscr{B}(\mathcal{H})$ be a self-adjoint operator with spectrum of $A$ is contained in $[m,M]$ for some scalars $m < M$. If $f(t)$ is a continuous and convex function on $[m,M]$, then 
\[ f\big(\langle Au,u \rangle\big)\le \langle f(A)u,u \rangle,\]
for every unit vector $u\in\mathcal{H}$.
\end{lemma}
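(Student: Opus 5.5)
The plan is to reduce the operator inequality to the scalar convexity of $f$ via a supporting line, and then transfer it to $A$ through the continuous functional calculus, which preserves order. Fix a unit vector $u\in\mathcal{H}$ and put $t_0=\langle Au,u\rangle$. Since $m I\le A\le M I$, we have $t_0\in[m,M]$.

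First I will treat the generic case $t_0\in(m,M)$. A convex function on $[m,M]$ has finite one-sided derivatives at interior points. Choosing $k$ between $f'_-(t_0)$ and $f'_+(t_0)$ gives the supporting-line inequality
\[
f(t)\ge f(t_0)+k(t-t_0)\qquad\text{for all } t\in[m,M].
\]
Both sides are continuous functions on $[m,M]\supseteq\sigma(A)$. The continuous functional calculus is order preserving, since $h\ge 0$ on $\sigma(A)$ implies $h(A)\ge 0$. Hence
\[
f(A)\ge f(t_0)I+k(A-t_0 I).
\]
Taking the inner product with the unit vector $u$ gives
\[
\langle f(A)u,u\rangle\ge f(t_0)+k\big(\langle Au,u\rangle-t_0\big)=f(\langle Au,u\rangle),
\]
which is the claim.

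The step I expect to need care is the endpoint case $t_0\in\{m,M\}$, where a finite supporting slope may fail to exist. Suppose $t_0=m$. Then $A-mI\ge0$ and $\langle (A-mI)u,u\rangle=0$, so $\|(A-mI)^{1/2}u\|^2=0$. This gives $(A-mI)^{1/2}u=0$, hence $Au=mu$. Thus $u$ is an eigenvector of $A$, and $p(A)u=p(m)u$ for every polynomial $p$. Approximating $f$ uniformly on $[m,M]$ by polynomials (Weierstrass) yields $f(A)u=f(m)u$. Therefore $\langle f(A)u,u\rangle=f(m)=f(\langle Au,u\rangle)$, so equality holds. The case $t_0=M$ is symmetric, using $MI-A\ge 0$.

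Alternatively, one could use the spectral measure $\mu_u(\cdot)=\langle E(\cdot)u,u\rangle$, which is a probability measure on $[m,M]$. In terms of it, $\langle Au,u\rangle=\int\lambda\,d\mu_u$ and $\langle f(A)u,u\rangle=\int f\,d\mu_u$, so the statement becomes the scalar Jensen inequality. The supporting-line route above avoids measure-theoretic details, and that is the one I would write out.
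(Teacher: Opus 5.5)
Your argument is correct and complete. The paper gives no proof of this lemma. It quotes the result from the literature and describes it as a consequence of the classical Jensen inequality and the spectral theorem. That is the spectral-measure route you list as an alternative: $\langle Au,u\rangle=\int t\,d\mu_u$ and $\langle f(A)u,u\rangle=\int f\,d\mu_u$ for the probability measure $\mu_u=\langle E(\cdot)u,u\rangle$ on $[m,M]$, followed by scalar Jensen.

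Your main route replaces the projection-valued measure by two elementary facts: the supporting line of $f$ at $t_0=\langle Au,u\rangle$, and the positivity of the continuous functional calculus ($h\ge 0$ on $\sigma(A)$ implies $h(A)\ge 0$). This makes it more self-contained. The price is the separate treatment of $t_0\in\{m,M\}$, which is genuinely needed: a continuous convex function on a closed interval can have infinite one-sided slope at an endpoint, e.g.\ $f(t)=-\sqrt{t-m}$. Your handling of that case is exactly right: $\langle (A-mI)u,u\rangle=0$ with $A-mI\ge 0$ forces $Au=mu$, and then $f(A)u=f(m)u$ by uniform polynomial approximation on $[m,M]$.

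In the spectral-measure route the same issue is absorbed into the scalar Jensen inequality. If the barycenter of $\mu_u$ is $m$, then $\int (t-m)\,d\mu_u=0$ forces $\mu_u$ to be the point mass at $m$. That is the measure-theoretic counterpart of your eigenvector observation.
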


Now, we note the following scalar inequality involving interpolation paths associated with three arbitrary symmetric means.
    If $\mu,\nu,\lambda\in[0,1]$ and $0\le a\leq b\leq c$, then
    \begin{equation}
        a\leq (a\sigma_\mu b)\rho_\lambda(a\tau_\nu c)\leq c. \label{zzxx}
    \end{equation} Using this inequality, we obtain the following lemma. 

\begin{lemma}\label{meanpower r}
     Let $u,v \in \mathcal{H}$   and $f$ be a multiplicative Orlicz function. Then  for any $\mu,\nu,\lambda\in[0,1]$ the following inequality holds
     \begin{eqnarray*}
           && f\left( |\langle u,v \rangle|^2\right)\\ &\leq&\left( f(  |\langle u,v \rangle|^2)~\sigma_\mu~ f(|\langle u,v\rangle |)f\big(\sqrt{\langle u,u\rangle\langle v,v\rangle }\big)\right)\rho_\lambda \left( f(  |\langle u,v \rangle|^2)~\tau_\nu~ f(\langle u,u\rangle\langle v,v\rangle )\right).
     \end{eqnarray*}
\end{lemma}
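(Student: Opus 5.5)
This follows from the scalar inequality \eqref{zzxx} once we check the ordering $a\le b\le c$ for suitable $a,b,c$. Set
\[
a=f\big(|\langle u,v\rangle|^2\big),\qquad b=f\big(|\langle u,v\rangle|\big)f\big(\sqrt{\langle u,u\rangle\langle v,v\rangle}\big),\qquad c=f\big(\langle u,u\rangle\langle v,v\rangle\big).
\]
Once $0\le a\le b\le c$ is established, the left inequality in \eqref{zzxx} gives
\[
a\le (a\sigma_\mu b)\rho_\lambda(a\tau_\nu c),
\]
which is exactly the claimed statement.

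We verify the chain in the following order.

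\textbf{Nonnegativity.} We have $a\ge 0$ because an Orlicz function takes values in $[0,\infty)$.

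\textbf{The bound $a\le b$.} Multiplicativity gives $f(st)=f(s)f(t)$ for $s,t\ge0$. Hence
\[
a=f\big(|\langle u,v\rangle|\cdot|\langle u,v\rangle|\big)=f\big(|\langle u,v\rangle|\big)\,f\big(|\langle u,v\rangle|\big).
\]
The Cauchy--Schwarz inequality gives $|\langle u,v\rangle|\le \sqrt{\langle u,u\rangle\langle v,v\rangle}$. Since $f$ is non-decreasing, $f\big(|\langle u,v\rangle|\big)\le f\big(\sqrt{\langle u,u\rangle\langle v,v\rangle}\big)$. Multiplying this by the nonnegative factor $f\big(|\langle u,v\rangle|\big)$ yields $a\le b$.

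\textbf{The bound $b\le c$.} Apply the same monotonicity step to the remaining factor: $f\big(|\langle u,v\rangle|\big)\le f\big(\sqrt{\langle u,u\rangle\langle v,v\rangle}\big)$. Therefore
\[
b\le f\big(\sqrt{\langle u,u\rangle\langle v,v\rangle}\big)^2=f\big(\langle u,u\rangle\langle v,v\rangle\big)=c,
\]
where the last equality uses multiplicativity once more.

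\textbf{Conclusion.} With $0\le a\le b\le c$ in hand, \eqref{zzxx} applies directly and yields the lemma.

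The only step needing care is justifying \eqref{zzxx} itself, if it is not taken as given. Since $\sigma_\mu$ and $\tau_\nu$ are means with $a\le b$ and $a\le c$, we get
\[
a\le a\sigma_\mu b\le b\le c\qquad\text{and}\qquad a\le a\tau_\nu c\le c.
\]
The two arguments of $\rho_\lambda$ therefore both lie in $[a,c]$. By monotonicity of $\rho_\lambda$ in each component, together with the idempotence $a\rho_\lambda a=a$ and $c\rho_\lambda c=c$ (which follow from homogeneity and the mean property $x\le x\rho_\lambda x\le x$), the value $(a\sigma_\mu b)\rho_\lambda(a\tau_\nu c)$ lies in $[a,c]$.
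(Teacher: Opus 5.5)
Your proof is correct and follows the paper's argument: you obtain $0\le a\le b\le c$ from Cauchy--Schwarz together with the monotonicity and multiplicativity of $f$, and then apply the left inequality of \eqref{zzxx}. Your justification of \eqref{zzxx} itself is a sound addition that the paper takes for granted: both arguments of $\rho_\lambda$ lie in $[a,c]$, $\rho_\lambda$ is monotone, and $x\rho_\lambda x=x$, which already follows from the mean property without homogeneity.
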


\begin{proof}
Let $u,v \in \mathcal{H}$. Then by the Cauchy-Schwarz inequality, we have
\begin{eqnarray*}
    |\langle u,v \rangle|^2&\le& |\langle u,v\rangle |\sqrt{\langle u,u\rangle\langle v,v\rangle }\le\langle u,u\rangle\langle v,v\rangle. \label{gg2}
\end{eqnarray*}
so,
\begin{eqnarray*}
    f\left(|\langle u,v \rangle|^2\right)&\le& f\left(|\langle u,v\rangle |\right)f\big(\sqrt{\langle u,u\rangle\langle v,v\rangle }\big)\leq f\left(\langle u,u\rangle\langle v,v\rangle\right).
\end{eqnarray*}
Then the desired inequality follows by replacing $a$ by $f\left(|\langle u,v \rangle|^2\right)$,\\
$b$ by $f\left(|\langle u,v\rangle |\right)f\big(\sqrt{\langle u,u\rangle\langle v,v\rangle }\big)$ and $c$ by  $f\left(\langle u,u\rangle\langle v,v\rangle\right)$ in (\ref{zzxx}).
\end{proof}
The next lemma is a simple consequence of the classical Jenson and Young's inequalities
\begin{lemma}\cite{hardy}\label{young}
     Let $a,b\geq 0$, $0\leq \mu\leq 1$ and $p,q>1$ such that $\frac{1}{p}+\frac{1}{q}=1$. Then for $r\geq 1$,
    \begin{enumerate}
        \item $a^\mu b^{1-\mu}\leq\alpha a+(1-\mu)b\leq(\mu a^r+(1-\mu)b^r)^\frac{1}{r};$
        \item $ab\leq\frac{a^p}{p}+\frac{b^q}{q}\leq\left(\frac{a^{pr}}{p}+\frac{b^{qr}}{q}\right)^\frac{1}{r}$
    \end{enumerate}
\end{lemma}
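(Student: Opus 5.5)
The two chains have the same shape: a weighted AM--GM (Young) step followed by a power-mean (Jensen) step. I would prove part (1) directly and then derive part (2) from it by substitution. In part (1) I read the coefficient $\alpha$ in the middle term as the typo it evidently is for $\mu$, so the middle term is the weighted arithmetic mean $\mu a+(1-\mu)b = b\nabla_\mu a$.

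For the first inequality in (1), the endpoint cases $\mu=0$, $\mu=1$, or $ab=0$ are trivial: either both sides coincide, or the left side is $0$ and the right side is nonnegative. For $a,b>0$ and $0<\mu<1$ I would use the concavity of $\log$ on $(0,\infty)$:
\[
\log\big(\mu a+(1-\mu)b\big)\ge \mu\log a+(1-\mu)\log b=\log\big(a^\mu b^{1-\mu}\big).
\]
Exponentiating gives $a^\mu b^{1-\mu}\le \mu a+(1-\mu)b$. This is the inequality $a\#_\lambda b\le a\nabla_\lambda b$ already recalled in the introduction, with the roles of $a$ and $b$ swapped.

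For the second inequality in (1), take $r\ge1$, so that $\varphi(t)=t^r$ is convex and nondecreasing on $[0,\infty)$. Convexity (the two-point Jensen inequality, which is the scalar case of Lemma~\ref{vvvcc}) gives
\[
\big(\mu a+(1-\mu)b\big)^r\le \mu a^r+(1-\mu)b^r.
\]
Both sides are nonnegative, so taking $r$-th roots preserves the inequality and gives $\mu a+(1-\mu)b\le(\mu a^r+(1-\mu)b^r)^{1/r}$.

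For part (2), with $p,q>1$ and $\frac1p+\frac1q=1$, I would apply part (1) with $\mu=\frac1p$ (so $1-\mu=\frac1q$), replacing $a$ by $a^p$ and $b$ by $b^q$. Since $(a^p)^{1/p}(b^q)^{1/q}=ab$, the first inequality becomes $ab\le \frac{a^p}{p}+\frac{b^q}{q}$. The second becomes
\[
\frac{a^p}{p}+\frac{b^q}{q}\le\left(\frac{a^{pr}}{p}+\frac{b^{qr}}{q}\right)^{1/r}.
\]
There is no real obstacle in this argument. The only points needing care are the degenerate cases where $\log$ is undefined, which are handled separately, and the observation that the $r$-th root is monotone, which is what allows the convexity inequality to be transferred to the stated form.
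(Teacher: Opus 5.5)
Your proof is correct and matches the paper, which gives no argument of its own but cites the lemma as a simple consequence of the classical Jensen and Young inequalities. Your steps are exactly that: weighted AM--GM via concavity of $\log$, Jensen for the convex function $t\mapsto t^r$ followed by taking $r$-th roots, and part (2) by the substitution $\mu=\frac{1}{p}$, $a\mapsto a^p$, $b\mapsto b^q$; your reading of $\alpha$ as a typo for $\mu$ is also right.
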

A refinement of Lemma \ref{young}$(1)$ was proven by Kittaneh et al. in $2010$ \cite{kit2010}.
\begin{equation}\label{r_0}
    a^\mu b^{1-\mu}\leq\mu a+(1-\mu)b-r_0(a^\frac{1}{2}-b^\frac{1}{2})^2
\end{equation}
where $r_0=\min\{\mu, 1-\mu\}$. Inequality \eqref{r_0} can also be written
\begin{equation}\label{youngmody}
ab+\min\left\{\frac{1}{p},\frac{1}{q}\right\}(a^\frac{p}{2}-b^\frac{q}{2})^2\leq\frac{a^p}{p}+\frac{b^q}{q},
\end{equation}
 for $a, b\geq 0$ and $p,q>1$ such that $\frac{1}{p}+\frac{1}{q}=1$.

In \cite{sheik}, Sheikhhosseini gave a new improvement of the inequality \eqref{r_0} as 
\begin{equation}
    (a^\frac{1}{p}b^\frac{1}{q})^m+(r_0)^m(a^\frac{m}{2}-b^\frac{m}{2})^2\leq\Big(\frac{a^r}{p}+\frac{b^r}{q}\Big)^\frac{m}{r},
\end{equation}
where $r_0=\min\left\{\frac{1}{p},\frac{1}{q}\right\}$, $r\geq1$ and $m\in\mathbb{N}$. In particular, if $p=q=2$, then 
\begin{equation}\label{a1/2b1/2}
  (a^\frac{1}{2}b^\frac{1}{2})^m+\frac{1}{2^m}(a^\frac{m}{2}-b^\frac{m}{2})^2\leq 2^{-\frac{m}{r}}(a^r+b^r)^\frac{m}{r}.  
\end{equation}
For $r\geq2$, a refined version of the right-hand inequality in Lemma \ref{young}$(2)$ is stated as 
\begin{lemma}\cite{gaoineq}\label{ab^r}
    If $a,b\geq 0$ and $p,q\geq 1$ are such that $\frac{1}{p}+\frac{1}{q}=1$, then for $r\geq 2$,
    \begin{equation*}
        (ab)^r\leq \left(\frac{a^p}{p}+\frac{b^q}{q}\right)^r\leq\frac{a^{pr}}{p^2}+\frac{b^{qr}}{q^2}+\frac{1}{pq}\left(a^{p(r-1)}b^q+a^pb^{q(r-1)}\right).
    \end{equation*}
    In particular, equality holds when $a^p=b^q.$
\end{lemma}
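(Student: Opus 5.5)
The first inequality in Lemma \ref{ab^r} is just Young's inequality $ab\le \frac{a^p}{p}+\frac{b^q}{q}$ (Lemma \ref{young}(2)) raised to the power $r>0$. So all the work is in the second inequality, and the plan is to prove it with a single convexity step.

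I would first normalize. Put $x=a^p\ge 0$, $y=b^q\ge 0$, $s=\frac1p$, $t=\frac1q$, so that $s,t\ge 0$ and $s+t=1$. Then $a^{pr}=x^r$, $b^{qr}=y^r$, $a^{p(r-1)}b^q=x^{r-1}y$ and $a^pb^{q(r-1)}=xy^{r-1}$. The claim becomes
\begin{equation*}
(sx+ty)^r\le s^2x^r+t^2y^r+st\left(x^{r-1}y+xy^{r-1}\right).
\end{equation*}

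Next I split off one factor and write $(sx+ty)^r=(sx+ty)\,(sx+ty)^{r-1}$. Since $r\ge 2$, the exponent satisfies $r-1\ge 1$, so $u\mapsto u^{r-1}$ is convex on $[0,\infty)$. Jensen's inequality for the two-point weights $s,t$ then gives $(sx+ty)^{r-1}\le s x^{r-1}+t y^{r-1}$. The factor $sx+ty$ is nonnegative, so multiplying by it preserves the inequality:
\begin{equation*}
(sx+ty)^r\le (sx+ty)\left(sx^{r-1}+ty^{r-1}\right)=s^2x^r+st\,xy^{r-1}+st\,x^{r-1}y+t^2y^r.
\end{equation*}
This is exactly the required right-hand side.

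For the equality statement, suppose $a^p=b^q$, i.e. $x=y$. Then both Young's inequality and the Jensen step are equalities, and both sides of the lemma equal $x^r$ because $s^2+2st+t^2=1$.

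There is no genuine obstacle here. The only point needing care is the choice of splitting: peel off exactly one factor so that the exponent left, $r-1$, is at least $1$. That is precisely where the hypothesis $r\ge 2$ enters, through the convexity of $u^{r-1}$.
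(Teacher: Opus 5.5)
Your proof is correct. The substitution $x=a^p$, $y=b^q$, $s=\frac1p$, $t=\frac1q$ turns the right-hand side into exactly $(sx+ty)\bigl(sx^{r-1}+ty^{r-1}\bigr)$. Jensen's inequality for the convex map $u\mapsto u^{r-1}$ then gives the second inequality, and this is precisely where $r\ge 2$ is needed. Your check of the equality case $a^p=b^q$ is also right.

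Note that the paper gives no proof of this lemma; it is only quoted from \cite{gaoineq}, so there is no internal argument to compare against. Your argument is a complete, self-contained justification of the cited result.
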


\begin{lemma}\cite{Kittaneh}\label{<Ax,x>^r}
    Let $A\in\mathscr{B}(\mathcal{H}),A\geq 0$ and $x\in\mathcal{H}$ such that $\|x\|=1$. Then 
    \begin{enumerate}
        \item $\langle Ax,x\rangle^r\leq\langle A^rx,x\rangle$ for $r\geq 1$
        \item $\langle A^rx,x\rangle\leq\langle Ax,x\rangle^r$ for  $0\leq r\leq 1$.
    \end{enumerate}
\end{lemma}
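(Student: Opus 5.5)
The statement is Lemma \ref{<Ax,x>^r}: for $A\geq 0$ and a unit vector $x$, one has $\langle Ax,x\rangle^r\leq\langle A^rx,x\rangle$ when $r\ge1$, and the reverse inequality when $0\le r\le 1$. I would derive both parts from the Jensen-type inequality of Lemma \ref{vvvcc}, applied to the positive operator $A$ and suitable power functions.

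\textbf{Setup.} Since $A\ge 0$, $A$ is self-adjoint and its spectrum lies in $[0,\|A\|]$. If $\|A\|=0$, both sides vanish and there is nothing to prove. Otherwise take $m=0$ and $M=\|A\|$, or any $M>0$ if one wants strict $m<M$ as Lemma \ref{vvvcc} requires.

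\textbf{Part (1), $r\ge 1$.} The function $f(t)=t^r$ is continuous and convex on $[0,M]$. Lemma \ref{vvvcc} gives
\[
\langle Ax,x\rangle^r=f(\langle Ax,x\rangle)\le\langle f(A)x,x\rangle=\langle A^r x,x\rangle .
\]
This uses that $\langle Ax,x\rangle\in[0,M]$, which holds because $A\ge0$ and $\|x\|=1$.

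\textbf{Part (2), $0\le r\le 1$.} Here $t^r$ is concave, so I apply Lemma \ref{vvvcc} to the convex function $g(t)=-t^r$. This gives $-\langle Ax,x\rangle^r\le\langle -A^r x,x\rangle$, which is exactly the claimed inequality. The endpoint $r=0$ is trivial, since both sides equal $1$ with the convention $A^0=I$.

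An alternative route for part (2) avoids concave functions. Put $B=A^r$ and $s=1/r\ge1$, and apply part (1) to $B$: $\langle Bx,x\rangle^{s}\le\langle B^{s}x,x\rangle=\langle Ax,x\rangle$. Taking $r$-th powers gives the result.

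\textbf{Main obstacle.} The only delicate point is justifying the Jensen step itself, in case one does not simply cite Lemma \ref{vvvcc}. If needed, I would use the spectral theorem: $\langle f(A)x,x\rangle=\int f(t)\,d\mu_x(t)$, where $\mu_x$ is a probability measure because $\|x\|=1$. Scalar Jensen then gives $f\left(\int t\,d\mu_x\right)\le\int f\,d\mu_x$, which is the required inequality.
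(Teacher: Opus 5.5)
Your proof is correct and takes essentially the standard route: the paper gives no proof of this lemma and only quotes it (it is McCarthy's inequality), and the cited source obtains it as you do, from the spectral theorem and Jensen's inequality, which is exactly Lemma \ref{vvvcc} applied to $t^r$ for part (1) and to $-t^r$ for part (2). Your alternative derivation of (2), applying (1) to $B=A^r$ with exponent $s=1/r$ and then taking $r$-th powers, is also valid.
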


\section{\textbf{Main results}}\label{sec3}
In this section, we establish several upper bounds for the Berezin number of reproducing kernel   Hilbert space operators by employing various types of means.

\begin{theorem}\label{ber4S*T}  Let $A,B\in \mathscr{B}(\mathcal{H})$, and $\sigma_\mu,\rho_\lambda,\tau_\nu$ be any interpolational paths corresponding to the symmetric means $\sigma, \rho, \tau$ respectively, where $\mu,\nu,\lambda\in[0,1]$. If
$f$ is a multiplicative Orlicz function, then the following inequality holds for $\zeta\geq 0$
\begin{eqnarray*}
    f\left(\textbf{ber}^{4}(A^*B)\right)
    &\leq&\frac{2\zeta+1}{8\zeta+8}\left\|f\left((B^*B)^2\right)+f\left((A^*A)^2\right)\right\|^2_{\textbf{ber}}+\frac{1}{2\zeta+2}\Bigg(\bigg(f\left(\textbf{ber}^2( A^*AB^*B)\right)\sigma_\mu \\
    &&\frac{1}{2}f\left(\textbf{ber}( A^*AB^*B)\right)\left\|f\left((B^*B)^2\right)+f\left((A^*A)^2\right)\right\|_\text{ber}\bigg)\rho_\lambda~\\
    &&\bigg(f\left(\textbf{ber}^2( A^*AB^*B)\right)\tau_\nu\frac{1}{4} \left\|f\left((B^*B)^2\right)+f\left((A^*A)^2\right)\right\|_\text{ber}^2\bigg)\Bigg).
    \end{eqnarray*}
\end{theorem}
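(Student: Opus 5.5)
Fix $x\in X$ and write $\hat k=\hat k_x$. The plan is to bound $f\big(|\langle A^*B\hat k,\hat k\rangle|^4\big)$ pointwise and then take the supremum over $x$. Set $u=B^*B\hat k$ and $v=A^*A\hat k$. Since $f$ is continuous and non-decreasing, $f(\textbf{ber}^4(A^*B))=\sup_x f(|\widetilde{A^*B}(x)|^4)$.

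\emph{Step 1.} By Cauchy--Schwarz and the AM--GM inequality,
\[
|\langle A^*B\hat k,\hat k\rangle|^2=|\langle B\hat k,A\hat k\rangle|^2\le \|B\hat k\|^2\|A\hat k\|^2=\langle B^*B\hat k,\hat k\rangle\langle \hat k,A^*A\hat k\rangle .
\]
Squaring gives
\[
|\langle A^*B\hat k,\hat k\rangle|^4\le |\langle u,\hat k\rangle\langle \hat k,v\rangle|^2 .
\]
Lemma~\ref{estimation1} with $e=\hat k$ and $r=2$ then yields
\[
|\langle A^*B\hat k,\hat k\rangle|^4\le \frac{2\zeta+1}{2\zeta+2}\|u\|^2\|v\|^2+\frac{1}{2\zeta+2}|\langle u,v\rangle|^2 .
\]

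\emph{Step 2.} Apply $f$ and use its monotonicity and convexity. Since $f(0)=0$, convexity gives $f(ta+sb)\le tf(a)+sf(b)$ for $t+s\le 1$, which here equals $1$. Hence
\[
f\big(|\widetilde{A^*B}(x)|^4\big)\le \frac{2\zeta+1}{2\zeta+2}f(\|u\|^2\|v\|^2)+\frac{1}{2\zeta+2}f(|\langle u,v\rangle|^2).
\]

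\emph{Step 3 (first term).} Use multiplicativity and then AM--GM:
\[
f(\|u\|^2\|v\|^2)=f(\|u\|^2)f(\|v\|^2)\le \tfrac14\big(f(\|u\|^2)+f(\|v\|^2)\big)^2 .
\]
Now $\|u\|^2=\langle (B^*B)^2\hat k,\hat k\rangle$. Lemma~\ref{vvvcc} (Jensen) gives $f(\langle (B^*B)^2\hat k,\hat k\rangle)\le\langle f((B^*B)^2)\hat k,\hat k\rangle$, and similarly for $A$. Therefore
\[
f(\|u\|^2)+f(\|v\|^2)\le \big\langle \big(f((B^*B)^2)+f((A^*A)^2)\big)\hat k,\hat k\big\rangle\le \big\|f((B^*B)^2)+f((A^*A)^2)\big\|_{\textbf{ber}} .
\]
This produces the coefficient $\frac{2\zeta+1}{8\zeta+8}$. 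The same estimate gives
\[
f\big(\sqrt{\langle u,u\rangle\langle v,v\rangle}\big)\le \tfrac12\big\|f((B^*B)^2)+f((A^*A)^2)\big\|_{\textbf{ber}} ,
\]
because multiplicativity gives $f(\sqrt{st})=\sqrt{f(s)f(t)}$ (as $f(\sqrt{y})^2=f(y)$), and then AM--GM applies. Likewise
\[
f(\langle u,u\rangle\langle v,v\rangle)\le\tfrac14\big\|f((B^*B)^2)+f((A^*A)^2)\big\|_{\textbf{ber}}^2 .
\]

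\emph{Step 4 (second term).} Apply Lemma~\ref{meanpower r} to $f(|\langle u,v\rangle|^2)$. Since the means are increasing in each argument, we may replace $f(\sqrt{\cdots})$ and $f(\langle u,u\rangle\langle v,v\rangle)$ by the bounds from Step 3. We may also replace $|\langle u,v\rangle|=|\langle A^*AB^*B\hat k,\hat k\rangle|$ by $\textbf{ber}(A^*AB^*B)$, which increases every $f(\cdot)$ term. Taking the supremum over $x$ then finishes the proof. One caveat: the first argument $f(|\langle u,v\rangle|^2)$ also gets bounded by $f(\textbf{ber}^2(A^*AB^*B))$, and this is allowed only because the whole mean expression is monotone in each slot.

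The main obstacle is this monotonicity bookkeeping. Lemma~\ref{meanpower r} has $a=f(|\langle u,v\rangle|^2)$ appearing inside both means. Replacing all three slots by larger quantities is justified by property (3) of means and of the interpolation paths. It is not justified by the ordering inequality \eqref{zzxx}, which only gives a lower bound, so I would check carefully that each substitution is made through monotonicity.
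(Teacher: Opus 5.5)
Your proposal is correct and follows the paper's proof essentially step for step. The chain is the same: Cauchy--Schwarz followed by Lemma~\ref{estimation1} with $r=2$; monotonicity and convexity of $f$; multiplicativity together with Jensen (Lemma~\ref{vvvcc}) and $4ab\le(a+b)^2$ for the first term; and Lemma~\ref{meanpower r} plus monotonicity of the interpolation paths in each slot for the second term. The only differences are cosmetic. You bound $f\big(\sqrt{\langle u,u\rangle\langle v,v\rangle}\big)$ through $f(\sqrt{st})=\sqrt{f(s)f(t)}$, whereas the paper applies AM--GM inside $f$ and then convexity. Also, the AM--GM you cite in Step~1 is not actually used there.
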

\begin{proof}
Let $\hat{k}_x$ be a normalized reproducing kernel of $\mathcal{H}$. Then we have
\begin{eqnarray}
   |\langle A^*B\hat{k}_x,\hat{k}_x\rangle|^{4}
    &\leq&\left(|\langle B^*B\hat{k}_x,\hat{k}_x\rangle||\langle A^*A\hat{k}_x,\hat{k}_x\rangle|\right)^2~~(\mbox{by Cauchy Schwarz inequality})\nonumber\\
    &=&|\langle B^*B\hat{k}_x,\hat{k}_x\rangle\langle \hat{k}_x,A^*A\hat{k}_x\rangle|^2\nonumber\\
    &\leq&\frac{2\zeta+1}{2\zeta+2}\big\langle (B^*B)^2\hat{k}_x,\hat{k}_x\big\rangle\big\langle (A^*A)^2\hat{k}_x,\hat{k}_x\big\rangle+\frac{1}{2\zeta+2}\big|\big\langle B^*B\hat{k}_x,A^*A\hat{k}_x\big\rangle\big|^2, \nonumber
\end{eqnarray} where the second inequality is derived from Lemma \ref{estimation1}, for the case $r=2$.
Applying the non-decreasing and convexity property of $f$, we get
\begin{eqnarray}
    f\left(|\langle A^*B\hat{k}_x,\hat{k}_x\rangle|^{4}\right)
    \leq\frac{2\zeta+1}{2\zeta+2}f\left(\langle (B^*B)^2\hat{k}_x,\hat{k}_x\rangle\langle (A^*A)^2\hat{k}_x,\hat{k}_x\rangle\right)+\frac{1}{2\zeta+2}f\left(|\langle B^*B\hat{k}_x,A^*A\hat{k}_x\rangle|^2\right).\nonumber\\ \label{one1}
    \end{eqnarray}
    Now, using Lemma \ref{vvvcc} together with the multiplicative property of $f$, we get
    \begin{eqnarray}
     && f\left(\langle (B^*B)^2\hat{k}_x,\hat{k}_x\rangle\langle (A^*A)^2\hat{k}_x,\hat{k}_x\rangle\right)\\
      &=&f\left(\langle (B^*B)^2\hat{k}_x,\hat{k}_x\rangle\right)f\left(\langle (A^*A)^2\hat{k}_x,\hat{k}_x\rangle\right)\nonumber\\
      &\leq&\left\langle f( (B^*B)^2)\hat{k}_x,\hat{k}_x\right\rangle\left\langle f( (A^*A)^2)\hat{k}_x,\hat{k}_x\right\rangle\nonumber\\
      &\leq&\frac{1}{4}\left\langle (f\big((B^*B)^2)+f((A^*A)^2))\hat{k}_x,\hat{k}_x\right\rangle^2~(\mbox{as}~4ab\leq (a+b)^2\,\forall \,a,b\ge 0)\nonumber\\
       &\leq&\frac{1}{4}\left\|f((B^*B)^2)+f((A^*A)^2)\right\|^2_{\textbf{ber}}. \label{one2}
    \end{eqnarray}
Again
\begin{eqnarray}
    &&f\left(|\langle B^*B\hat{k}_x,A^*A\hat{k}_x\rangle|^2\right)\nonumber\\
    &\leq&\bigg(f\left(|\langle B^*B\hat{k}_x,A^*A\hat{k}_x\rangle|^2\right)\sigma_\mu\nonumber\\
    &&f\left(|\langle B^*B\hat{k}_x,A^*A\hat{k}_x\rangle|\right)f\left(\sqrt{\langle B^*B\hat{k}_x,B^*B\hat{k}_x\rangle\langle A^*A\hat{k}_x,A^*A\hat{k}_x\rangle}\right)\bigg)~\rho_\lambda\nonumber\\
    &&\bigg(f\left(|\langle B^*B\hat{k}_x,A^*A\hat{k}_x\rangle|^2\right)\tau_\nu f\left(\langle B^*B\hat{k}_x,B^*B\hat{k}_x\rangle\langle A^*A\hat{k}_x,A^*A\hat{k}_x\rangle\right)\bigg)~~(\mbox{using Lemma} ~\ref{meanpower r})\nonumber\\
    &=&\bigg(f\left(|\langle B^*B\hat{k}_x,A^*A\hat{k}_x\rangle|^2\right)\sigma_\mu\nonumber\\
    &&f\left(|\langle B^*B\hat{k}_x,A^*A\hat{k}_x\rangle|\right)f\left(\sqrt{\langle B^*B\hat{k}_x,B^*B\hat{k}_x\rangle\langle A^*A\hat{k}_x,A^*A\hat{k}_x\rangle}\right)\bigg)~\rho_\lambda\nonumber\\
    &&\bigg(f\left(|\langle B^*B\hat{k}_x,A^*A\hat{k}_x\rangle|^2\right)\tau_\nu f\left(\langle (B^*B)^2\hat{k}_x,\hat{k}_x\rangle\right)f\left(\langle (A^*A)^2\hat{k}_x,\hat{k}_x\rangle\right)\bigg)\nonumber\\
    &\leq&\bigg(f\left(|\langle A^*AB^*B\hat{k}_x,\hat{k}_x\rangle|^2\right)~\sigma_\mu\nonumber\\
    &&f\left(|\langle A^*AB^*B\hat{k}_x,\hat{k}_x\rangle|\right)f\left(\frac{\langle (B^*B)^2\hat{k}_x,\hat{k}_x\rangle+\langle (A^*A)^2\hat{k}_x,\hat{k}_x\rangle}{2}\right)\bigg)\rho_\lambda~\nonumber\\
    &&\bigg(f\left(|\langle A^*AB^*B\hat{k}_x,\hat{k}_x\rangle|^2\right)\tau_\nu \left\langle f\left((B^*B)^2\right)\hat{k}_x,\hat{k}_x\right\rangle\left\langle f\left((A^*A)^2\right)\hat{k}_x,\hat{k}_x\right\rangle\bigg)\,\,\,\mbox{(by Lemma \ref{vvvcc})}\nonumber\\
    &\leq&\bigg(f\left(|\langle A^*AB^*B\hat{k}_x,\hat{k}_x\rangle|^2\right)\sigma_\mu\nonumber\\
    && \frac{1}{2}f\left(|\langle A^*AB^*B\hat{k}_x,\hat{k}_x\rangle|\right)\left\langle (f((B^*B)^2)+f((A^*A)^2))\hat{k}_x,\hat{k}_x\right\rangle\bigg)\rho_\lambda~\nonumber\\
    &&\bigg(f\left(|\langle A^*AB^*B\hat{k}_x,\hat{k}_x\rangle|^2\right)\tau_\nu \frac{1}{4}\left\langle (f((B^*B)^2)+f((A^*A)^2))\hat{k}_x,\hat{k}_x\right\rangle^2\bigg)~~(\mbox{as}~ 4ab\leq (a+b)^2\,\forall \,a,b\ge 0)\nonumber\\
    &\le&\bigg(f\left(\textbf{ber}^2( A^*AB^*B)\right)\sigma_\mu 
     \frac
     {1}{2}f\left(\textbf{ber}( A^*AB^*B)\right)\left\|f\left((B^*B)^2\right)+f\left((A^*A)^2\right)\right\|_\textbf{ber}\bigg)\rho_\lambda~\nonumber\\
     &&\bigg(f\left(\textbf{ber}^2( A^*AB^*B)\right)\tau_\nu\frac{1}{4} \left\|f\left((B^*B)^2\right)+f\left((A^*A)^2\right)\right\|_\textbf{ber}^2\bigg). \label{one3}
    \end{eqnarray} 
   
    From \eqref{one1}, \eqref{one2} and \eqref{one3} we have,
    \begin{eqnarray*}
     && f\left(|\langle A^*B\hat{k}_x,\hat{k}_x\rangle|^{4}\right) \\&\leq&\frac{2\zeta+1}{8\zeta+8}\left\|f\left((B^*B)^2\right)+f\left((A^*A)^2\right)\right\|^2_{\textbf{ber}}+\frac{1}{2\zeta+2}\Bigg(\bigg(f\left(\textbf{ber}^2( A^*AB^*B)\right)\sigma_\mu \\
     &&\frac{1}{2}f\left(\textbf{ber}( A^*AB^*B)\right)\left\|f\left((B^*B)^2\right)+f\left((A^*A)^2\right)\right\|_\textbf{ber}\bigg)\rho_\lambda~\\
     &&\bigg(f\left(\textbf{ber}^2( A^*AB^*B)\right)\tau_\nu\frac{1}{4} \left\|f\left((B^*B)^2\right)+f\left((A^*A)^2\right)\right\|_\textbf{ber}^2\bigg)\Bigg).
    \end{eqnarray*}
   Now, taking the supremum over all $x \in X$, we get our required result.
\end{proof}

\begin{cor}\label{saptami}
Let $A,B\in \mathscr{B}(\mathcal{H})$ and $r\geq 1$. Then for any $\mu,\nu,\lambda\in[0,1]$, the following inequality holds:
    \begin{eqnarray*}
        \textbf{ber}^{4r}(A^*B)&\leq& \frac{2\zeta+1}{8\zeta+8}\Big\|(B^*B)^{2r}+(A^*A)^{2r}\Big\|_{\textbf{ber}}^{2}\\
    &&+\frac{(1-\lambda)}{2^{\mu}(2\zeta+2)}\textbf{ber}^{2r(1-\mu)}( A^*AB^*B) \textbf{ber}^{r\mu}( A^*AB^*B)\Big\|(B^*B)^{2r}+(A^*A)^{2r})\Big\|^{\mu}_\textbf{ber}\\
    &&+\frac{\lambda}{2^{2\nu}(2\zeta+2)}\textbf{ber}^{2r(1-\nu)}( A^*AB^*B)\|(B^*B)^{2r}+(A^*A)^{2r}\|_\textbf{ber}^{2\nu}.
    \end{eqnarray*}
\end{cor}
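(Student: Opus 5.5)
We obtain Corollary \ref{saptami} by specializing Theorem \ref{ber4S*T}. We take $f(u)=u^r$ with $r\ge 1$. This is a multiplicative Orlicz function: it is continuous, convex, non-decreasing and non-degenerate, satisfies $f(0)=0$, tends to infinity, and obeys $f(uv)=f(u)f(v)$. For the means we choose $\sigma_\mu=\#_\mu$, $\tau_\nu=\#_\nu$ and $\rho_\lambda=\nabla_\lambda$. Each is an interpolational path for its symmetric mean, namely the geometric mean $\#$ and the arithmetic mean $\nabla$, as recalled in the introduction.

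With these choices, every term of Theorem \ref{ber4S*T} can be evaluated. First, $f(\textbf{ber}^4(A^*B))=\textbf{ber}^{4r}(A^*B)$. By functional calculus, $f((B^*B)^2)=(B^*B)^{2r}$ and $f((A^*A)^2)=(A^*A)^{2r}$. So the first summand becomes $\frac{2\zeta+1}{8\zeta+8}\|(B^*B)^{2r}+(A^*A)^{2r}\|_{\textbf{ber}}^2$.

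For the mean part, write $\beta=\textbf{ber}(A^*AB^*B)$ and $N=\|(B^*B)^{2r}+(A^*A)^{2r}\|_{\textbf{ber}}$. The first bracket is then $\beta^{2r}\#_\mu\big(\tfrac12\beta^r N\big)$. By definition of $\#_\mu$ this equals
\[
\beta^{2r(1-\mu)}\,2^{-\mu}\beta^{r\mu}N^{\mu}.
\]
The second bracket is $\beta^{2r}\#_\nu\big(\tfrac14 N^2\big)=2^{-2\nu}\beta^{2r(1-\nu)}N^{2\nu}$. Combining the two brackets with $\nabla_\lambda$ gives $(1-\lambda)\times(\text{first})+\lambda\times(\text{second})$. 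Multiplying by $\frac{1}{2\zeta+2}$ yields exactly the last two terms of the corollary.

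There is no real obstacle; the only points to check are bookkeeping and hypotheses.
\begin{itemize}
\item The paths $\#_\mu$, $\#_\nu$ and $\nabla_\lambda$ must satisfy the interpolation-path axioms, which hold by the standard examples recalled in the paper.
\item Lemma \ref{vvvcc}, used inside the theorem, needs $u^r$ to be convex on the relevant spectrum. This holds on $[0,\infty)$ for $r\ge1$, so Theorem \ref{ber4S*T} applies directly.
\item The implicit hypothesis $\zeta\ge 0$ is inherited from the theorem.
\end{itemize}
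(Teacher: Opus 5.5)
Your proposal is correct and matches the paper's proof: both specialize Theorem~\ref{ber4S*T} with $f(t)=t^r$, $\sigma_\mu=\#_\mu$, $\tau_\nu=\#_\nu$ and $\rho_\lambda=\nabla_\lambda$. You also write out the evaluation of the two bracketed means, $2^{-\mu}\beta^{2r(1-\mu)}\beta^{r\mu}N^{\mu}$ and $2^{-2\nu}\beta^{2r(1-\nu)}N^{2\nu}$, which the paper leaves implicit.
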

\begin{proof}
The required result is obtained by taking $f(t)=t^r$, $t\ge 0$, and $\rho_\lambda=\nabla_\lambda, ~ \sigma_\mu=\#_\mu,~\tau_\nu=\#_\nu$ in Theorem \ref{ber4S*T}. 
\end{proof}

\begin{remark}  In  \cite[Corollary 3.7]{Taghavi}, the authors derived the following Berezin number inequality\begin{eqnarray}
    \textbf{ber}^2(A^*B)\le \frac{1}{2}\Big\|(B^*B)^2+(A^*A)^2\Big\|_{\textbf{ber}}.\label{hh}
\end{eqnarray} 
  If we take $\mathcal{H}=\mathbb{C}^2$ and $A=\begin{bmatrix}
      1& 0\\
      0& \sqrt{2}
  \end{bmatrix}$,  $B=\begin{bmatrix}
      1& 0\\
      0& \sqrt{3}
  \end{bmatrix}$, then applying inequality (\ref{hh}) to these operators gives
 the bound  $\textbf{ber}^2(A^*B)\le 6.5$
 . In contrast, for $\lambda=1,~ \zeta=0$ and $r=1$ in Corollary \ref{saptami} we obtain \[\textbf{ber}^2(A^*B)\le\bigg( \frac{169}{8} +\frac{1}{2^{2\nu+1}}36^{(1-\nu)}169^\nu\bigg)^{\frac{1}{2}}.\]
 This expression depends on a positive parameter $\nu$ and for any choice of $\nu\in[0,1],$
  we
 obtain $\bigg( \frac{169}{8} +\frac{1}{2^{2\nu+1}}36^{(1-\nu)}169^\nu\bigg)^{\frac{1}{2}}\le 6.5$. This means that Corollary \ref{saptami} provides a significant refinement
 over the estimate obtained from inequality (\ref{hh}).
\end{remark}

\begin{theorem}\label{ber2rT}
Let $A\in \mathscr{B}(\mathcal{H})$, and $\sigma_\mu,\rho_\lambda,\tau_\nu$ be any interpolational paths corresponding to the symmetric means $\sigma, \rho, \tau$ respectively, where $\mu,\nu,\lambda\in[0,1]$. If
$f$ be a Orlicz function, then the following inequality holds
    \begin{eqnarray*}
       f\left(\textbf{ber}^{2}(A)\right)  &\leq&\left(f\left(\textbf{ber}^2 (A)\right)~\sigma_\mu~  f\left(\textbf{ber} (A)\| A^*A\|_{\textbf{ber}}^\frac{1}{2}\right)\right)\rho_\lambda\\
   &&\left(f\left(\textbf{ber}^2 (A)\right)~ \tau_\nu ~\frac{1}{2} \left\|f(|A|^{2})+f(|A^*|^{2})\right\|_\textbf{ber}\right).
   \end{eqnarray*}
\end{theorem}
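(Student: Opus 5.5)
Fix a normalized reproducing kernel $\hat{k}_x$ and set $a=|\langle A\hat{k}_x,\hat{k}_x\rangle|^2$. The plan is to exhibit a chain $a\le b\le c$ of scalars that depend on $x$, apply the scalar inequality \eqref{zzxx} to $f(a)\le f(b)\le f(c)$, then use the monotonicity of the means to replace each term by an $x$-independent bound, and finally take the supremum over $x$. No multiplicativity of $f$ should be needed, since $f$ will always be evaluated at a single product.

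First, the chain. By Cauchy--Schwarz,
\[
|\langle A\hat{k}_x,\hat{k}_x\rangle|^2\le |\langle A\hat{k}_x,\hat{k}_x\rangle|\,\|A\hat{k}_x\| = |\langle A\hat{k}_x,\hat{k}_x\rangle|\,\langle A^*A\hat{k}_x,\hat{k}_x\rangle^{1/2}.
\]
The mixed Schwarz inequality (Lemma \ref{txy^2}) with $\alpha=\tfrac12$ gives
\[
|\langle A\hat{k}_x,\hat{k}_x\rangle|^2\le \langle |A|\hat{k}_x,\hat{k}_x\rangle\langle |A^*|\hat{k}_x,\hat{k}_x\rangle\le \tfrac12\big(\langle |A|\hat{k}_x,\hat{k}_x\rangle^2+\langle |A^*|\hat{k}_x,\hat{k}_x\rangle^2\big).
\]
These two bounds are not ordered with respect to each other, so I would not set $c$ equal to the second expression directly. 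Instead I take
\[
a=|\langle A\hat{k}_x,\hat{k}_x\rangle|^2,\qquad b=\textbf{ber}(A)\,\|A^*A\|_{\textbf{ber}}^{1/2}.
\]
The first Cauchy--Schwarz bound gives $a\le b$, since $\langle A^*A\hat{k}_x,\hat{k}_x\rangle\le \textbf{ber}(A^*A)=\|A^*A\|_{\textbf{ber}}$ because $A^*A\ge0$.

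Second, for the third slot I use the bound $\textbf{ber}^2(A)\le \tfrac12\|f(|A|^2)+f(|A^*|^2)\|_{\textbf{ber}}$ in the $f$-form. By Lemma \ref{vvvcc} (or Lemma \ref{<Ax,x>^r}) together with convexity and monotonicity of $f$,
\[
f(a)\le f\Big(\tfrac12\big(\langle |A|^2\hat{k}_x,\hat{k}_x\rangle+\langle |A^*|^2\hat{k}_x,\hat{k}_x\rangle\big)\Big)\le \tfrac12\big\langle (f(|A|^2)+f(|A^*|^2))\hat{k}_x,\hat{k}_x\big\rangle .
\]
Here the first step uses $\langle |A|\hat{k}_x,\hat{k}_x\rangle^2\le\langle |A|^2\hat{k}_x,\hat{k}_x\rangle$, and the second uses convexity followed by Jensen for each of $|A|^2$ and $|A^*|^2$. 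The right-hand side is at most $c:=\tfrac12\|f(|A|^2)+f(|A^*|^2)\|_{\textbf{ber}}$.

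Third, since \eqref{zzxx} needs $a\le b\le c$ but $b$ and $c$ need not be ordered, I avoid \eqref{zzxx} in its ordered form. What I actually need is only the left inequality $x\le (x\sigma_\mu y)\rho_\lambda(x\tau_\nu z)$ whenever $x\le y$ and $x\le z$. This follows from the mean axioms: $x\sigma_\mu y\ge x\sigma_\mu x=x$ by monotonicity, similarly $x\tau_\nu z\ge x$, and then $\rho_\lambda$ of two quantities that are both $\ge x$ is $\ge x\rho_\lambda x=x$. Apply this with $x=f(a)$, $y=f(b)$, $z=c$, using $f(a)\le f(b)$ (as $f$ is nondecreasing) and $f(a)\le c$ (from the second step). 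Then use monotonicity of $\sigma_\mu,\tau_\nu,\rho_\lambda$ to replace $f(a)$ by $f(\textbf{ber}^2(A))$ in the first components, and take the supremum over $x$. On the left this gives $\sup_x f(a)=f(\textbf{ber}^2(A))$ by continuity and monotonicity of $f$.

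The main obstacle is the apparent need for the ordering $b\le c$ required in \eqref{zzxx}; the plan sidesteps it by using only the lower bound derived directly from the mean axioms. A secondary point is checking the identity $x\sigma_\mu x=x$ for interpolational paths, which follows from axiom (2) of a mean ($a\le\sigma(a,b)\le b$ with $a=b$).
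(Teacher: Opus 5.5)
Your proof is correct, but it is organized differently from the paper's. The paper stays pointwise and proves an ordered chain $a_x\le b_x\le c_x$. Here $b_x=|\langle A\hat k_x,\hat k_x\rangle|\,\|A\hat k_x\|$ (not its supremum) and $c_x=\tfrac12\big(\|A\hat k_x\|^2+\|A^*\hat k_x\|^2\big)$. The step $b_x\le c_x$ follows from $|\langle A\hat k_x,\hat k_x\rangle|=|\langle A^*\hat k_x,\hat k_x\rangle|\le\|A^*\hat k_x\|$ and AM--GM, so neither Lemma \ref{txy^2} nor $\langle|A|\hat k_x,\hat k_x\rangle^2\le\langle|A|^2\hat k_x,\hat k_x\rangle$ is needed. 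The paper then applies $f$, convexity and Lemma \ref{vvvcc}, uses \eqref{zzxx} exactly as stated, and only at the end bounds each slot by its supremum via monotonicity of the means.

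So the ordering obstacle you anticipate comes from your own choices. Your two pointwise bounds are indeed incomparable, but enlarging the second to $c_x$ restores the order. It is only after passing to the $x$-independent quantities $\textbf{ber}(A)\|A^*A\|_{\textbf{ber}}^{1/2}$ and $\tfrac12\|f(|A|^2)+f(|A^*|^2)\|_{\textbf{ber}}$ that ordering can genuinely fail. For example, take $f(t)=t$ and the $3\times 3$ matrix with $a_{11}=0.8$, $a_{32}=1$ and zeros elsewhere, on $\mathbb{C}^3$ with the standard kernels; these quantities are $0.8$ and $0.64$.

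Your remedy is valid and arguably more illuminating. Only the lower half of \eqref{zzxx} is ever used, and it needs just $X\le Y$ and $X\le Z$, via $X\sigma_\mu X=X$ and monotonicity. This shows the theorem is simply the two separate bounds $\textbf{ber}^2(A)\le\textbf{ber}(A)\|A^*A\|_{\textbf{ber}}^{1/2}$ and $f(\textbf{ber}^2(A))\le\tfrac12\|f(|A|^2)+f(|A^*|^2)\|_{\textbf{ber}}$, combined by an elementary property of means. The paper's route, in exchange, gives the full pointwise sandwich (including the unused upper bound) with slightly more elementary tools.
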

\begin{proof}
    Let $\hat{k}_x$ be a normalized reproducing kernel of $\mathcal{H}$. Then we have
    \begin{eqnarray*}
       |\langle A\hat{k}_x,\hat{k}_x\rangle|^{2}&\leq&|\langle A\hat{k}_x,\hat{k}_x\rangle|\|A\hat{k}_x\|\|\hat{k}_x\|\\
         &=&|\langle A\hat{k}_x,\hat{k}_x\rangle|\langle A\hat{k}_x,A\hat{k}_x\rangle^\frac{1}{2}~~(\mbox{as}~ \|\hat{k}_x\|=1)\\
        &=&|\langle A^*\hat{k}_x,\hat{k}_x\rangle|\langle A\hat{k}_x,A\hat{k}_x\rangle^\frac{1}{2}~~(\mbox{as}~\langle A\hat{k}_x,\hat{k}_x\rangle=\langle \hat{k}_x,A^*\hat{k}_x\rangle)\\
        &\leq&\langle A^*\hat{k}_x,A^*\hat{k}_x\rangle^\frac{1}{2}\langle A\hat{k}_x,A\hat{k}_x\rangle^\frac{1}{2}~~(\mbox{by the Cauchy-Schwarz inequality})\\
        &=&\langle AA^*\hat{k}_x,\hat{k}_x\rangle^\frac{1}{2}\langle A^*A\hat{k}_x,\hat{k}_x\rangle^\frac{1}{2}\\
        &\leq&\langle |A^*|^{2}\hat{k}_x,\hat{k}_x\rangle^\frac{1}{2}\langle |A|^{2}\hat{k}_x,\hat{k}_x\rangle^\frac{1}{2}\\
        &\leq& \frac{\langle |A^*|^{2}\hat{k}_x,\hat{k}_x\rangle+\langle |A|^{2}\hat{k}_x,\hat{k}_x\rangle}{2}\\
        &&(\mbox{using the arithmetic-geometric mean inequality})
    \end{eqnarray*}
    that is,
    \begin{equation*}
        |\langle A\hat{k}_x,\hat{k}_x\rangle|^{2}\leq|\langle A\hat{k}_x,\hat{k}_x\rangle|\langle A\hat{k}_x,A\hat{k}_x\rangle^\frac{1}{2}\leq\frac{\langle |A^*|^{2}\hat{k}_x,\hat{k}_x\rangle+\langle |A|^{2}\hat{k}_x,\hat{k}_x\rangle}{2}.
    \end{equation*}
  Using the non-decreasing and convexity property of $f$, we get 
  \begin{eqnarray*}
      f\left(|\langle A\hat{k}_x,\hat{k}_x\rangle|^{2}\right) &\le& f\left(|\langle A\hat{k}_x,\hat{k}_x\rangle|\langle A\hat{k}_x,A\hat{k}_x\rangle^\frac{1}{2}\right)\\
      & \le& \frac{1}{2} \left(f(\langle |A^*|^{2}\hat{k}_x,\hat{k}_x\rangle)+f(\langle |A|^{2}\hat{k}_x,\hat{k}_x\rangle)\right)\\
       & \le& \frac{1}{2} \left(\langle f(|A^*|^{2})\hat{k}_x,\hat{k}_x\rangle+\langle f(|A|^{2})\hat{k}_x,\hat{k}_x\rangle\right) \,\mbox{(by Lemma \ref{vvvcc})}\\
        &=& \frac{1}{2} \left(\langle (f(|A|^{2})+f(|A^*|^{2}))\hat{k}_x,\hat{k}_x\rangle\right). 
  \end{eqnarray*}
   % Now, by replacing $a$ by $f\left(|\langle A\hat{k}_x,\hat{k}_x\rangle|^{2}\right)$, $b$ by $f\left(|\langle A\hat{k}_x,\hat{k}_x\rangle|\langle A\hat{k}_x,A\hat{k}_x\rangle^\frac{1}{2}\right)$ and $c$ by $ \frac{1}{2} \left(\langle (f(|A|^{2})+f(|A^*|^{2}))\hat{k}_x,\hat{k}_x\rangle\right) $ in 
   Using inequality \eqref{zzxx}, we get,
\begin{eqnarray*}
   f\left(|\langle A\hat{k}_x,\hat{k}_x\rangle|^{2}\right)&\leq&\left(f\left(|\langle A\hat{k}_x,\hat{k}_x\rangle|^{2}\right)~\sigma_\mu~  f\left(|\langle A\hat{k}_x,\hat{k}_x\rangle|\langle A\hat{k}_x,A\hat{k}_x\rangle^\frac{1}{2}\right)\right)\\
   &&~\rho_\lambda~  \left(f\left(|\langle A\hat{k}_x,\hat{k}_x\rangle|^{2}\right)~ \tau_\nu ~\frac{1}{2} \langle (f(|A|^{2})+f(|A^*|^{2}))\hat{k}_x,\hat{k}_x\rangle \right)\\
  &\leq&\left(f\left(\textbf{ber}^2 (A)\right)~\sigma_\mu~  f\left(\textbf{ber} (A)\| A^*A\|_{\textbf{ber}}^\frac{1}{2}\right)\right)\\
   &&~\rho_\lambda~  \left(f\left(\textbf{ber}^2 (A)\right)~ \tau_\nu ~\frac{1}{2} \left\|f(|A|^{2})+f(|A^*|^{2})\right\|_\textbf{ber}\right).\\
\end{eqnarray*}
Taking the supremum over all $x \in X$, we obtain our required result.
   \end{proof}
   
\begin{cor}\label{astomi}
Let $A\in \mathscr{B}(\mathcal{H})$ and $r\geq 1$. Then for any $\mu,\nu,\lambda\in[0,1]$, the following inequality holds
       \begin{eqnarray*}
       \textbf{ber}^{2r}(A) &\leq& (1-\lambda)\left(\textbf{ber}^{2r(1-\mu)} (A)\textbf{ber}^{r\mu} (A)\| A^*A\|_{\textbf{ber}}^\frac{r\mu}{2}\right)\\
   &&+\frac{\lambda}{2^{\nu}}  \left(\textbf{ber}^{2r(1-\nu)} (A) \left\||A|^{2r}+|A^*|^{2r}\right\|^{\nu}_\textbf{ber}\right).\\
   \end{eqnarray*}
\end{cor}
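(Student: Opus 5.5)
This follows from Theorem \ref{ber2rT} by specialization, just as Corollary \ref{saptami} was obtained from Theorem \ref{ber4S*T}. Take $f(t)=t^r$ with $r\geq 1$. This is an Orlicz function: it is continuous, convex, non-decreasing, vanishes at $0$, tends to $\infty$, and is non-degenerate. Choose $\rho_\lambda=\nabla_\lambda$, $\sigma_\mu=\#_\mu$ and $\tau_\nu=\#_\nu$, which are interpolation paths of the arithmetic and geometric means as recorded in the introduction.

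With these choices the left side of Theorem \ref{ber2rT} becomes $\textbf{ber}^{2r}(A)$. The first bracket on the right is
\[
a\#_\mu b=a^{1-\mu}b^{\mu}=\textbf{ber}^{2r(1-\mu)}(A)\,\textbf{ber}^{r\mu}(A)\,\|A^*A\|_{\textbf{ber}}^{\frac{r\mu}{2}},
\]
with $a=\textbf{ber}^{2r}(A)$ and $b=\big(\textbf{ber}(A)\|A^*A\|^{1/2}_{\textbf{ber}}\big)^r$.

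For the second bracket, note $f(|A|^2)=|A|^{2r}$ and $f(|A^*|^2)=|A^*|^{2r}$ by the functional calculus. Hence
\[
\textbf{ber}^{2r}(A)\,\#_\nu\,\tfrac12\big\||A|^{2r}+|A^*|^{2r}\big\|_{\textbf{ber}}=\frac{1}{2^{\nu}}\,\textbf{ber}^{2r(1-\nu)}(A)\,\big\||A|^{2r}+|A^*|^{2r}\big\|^{\nu}_{\textbf{ber}}.
\]

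Finally, the outer path $\nabla_\lambda$ combines the two brackets as $(1-\lambda)(\cdot)+\lambda(\cdot)$, which gives exactly the stated bound.

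There is no real obstacle. The only points to check are the bookkeeping of exponents, in particular pulling the factor $\tfrac12$ out of $\#_\nu$ as $2^{-\nu}$, and confirming that $t^r$ satisfies the Orlicz hypotheses used in Theorem \ref{ber2rT}. Multiplicativity of $f$ is not needed here.
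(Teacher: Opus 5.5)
Your proposal is correct and matches the paper's proof: the corollary is obtained by substituting $f(t)=t^r$, $\rho_\lambda=\nabla_\lambda$, $\sigma_\mu=\#_\mu$ and $\tau_\nu=\#_\nu$ into Theorem \ref{ber2rT}. Your exponent bookkeeping, including the factor $2^{-\nu}$, is correct, and you are right that Theorem \ref{ber2rT} only requires $f$ to be an Orlicz function, not a multiplicative one.
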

\begin{proof}
The required result is obtained by taking $f(t)=t^r$, $t\ge 0$, and $\rho_\lambda=\nabla_\lambda, ~ \sigma_\mu=\#_\mu,~\tau_\nu=\#_\nu$ in Theorem \ref{ber2rT}. 
\end{proof}
\begin{remark} From \cite[Corollary 3.5(i)]{Taghavi}, we have
\begin{eqnarray}
    \textbf{ber}^{2}(A)\leq\frac{1}{2}\left\||A^*A+AA^*\right\|_\textbf{ber}.\label{jj1}
\end{eqnarray}
  For $\mathcal{H}=\mathbb{C}^2$ and $A=\begin{bmatrix}
      1& 1\\
      0& 0
  \end{bmatrix}$, inequality (\ref{jj1}) yields the  bound  $\textbf{ber}(A)\le\sqrt{ 1.5}$. In comparison, choosing $\lambda=0$, $\mu=1$ and $r=1$ in Corollary \ref{astomi} we obtain \[\textbf{ber}(A)\le 1.\]

Hence, Corollary \ref{astomi} provides an improvement over the bound obtained from inequality (\ref{jj1}).
  \end{remark}

\begin{theorem}\label{producf}Let $A,B,C,D,M,N\in \mathscr{B}(\mathcal{H})$, and $\sigma_\mu,\tau_\nu$ be any interpolational paths corresponding to the symmetric means $\sigma, \tau$, respectively, where $\mu,\nu\in[0,1]$. If
$f$ is a multiplicative Orlicz function and $\alpha,\beta\in[0,1]$, then the following inequality holds

    \begin{eqnarray*}
      &&f\left(\textbf{ber}\left(\frac{A^*MB+C^*ND}{2}\right)\right)\\
      &\le &\frac{1}{2}\left(f\left(\textbf{ber}( A^*MB)\right)\sigma_\mu\frac{1}{2}\left\|f(B^*|M|^{2\alpha}B)+f(A^*|M^*|^{2(1-\alpha)}A)\right\|_{\textbf{ber}}\right)\\
       &&+\frac{1}{2}\left(f(\textbf{ber}(C^*ND))\tau_\nu\frac{1}{2}\left\|f(D^*|N|^{2\beta}D)+f(C^*|N^*|^{2(1-\beta)}C)\right\|_{\textbf{ber}}\right).
    \end{eqnarray*}
\end{theorem}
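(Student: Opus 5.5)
Fix a normalized reproducing kernel $\hat{k}_x$. The plan is to bound each of $f(|\langle A^*MB\hat{k}_x,\hat{k}_x\rangle|)$ and $f(|\langle C^*ND\hat{k}_x,\hat{k}_x\rangle|)$ separately by a three-term chain $a\le b$, feed $a$ and $b$ into a single mean $\sigma_\mu$ (resp.\ $\tau_\nu$), and finally combine the two pieces by convexity of $f$.

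For the first term, write $\langle A^*MB\hat{k}_x,\hat{k}_x\rangle=\langle M(B\hat{k}_x),A\hat{k}_x\rangle$ and apply the mixed Schwarz inequality (Lemma \ref{txy^2}) with $u=B\hat{k}_x$, $v=A\hat{k}_x$:
\[
|\langle A^*MB\hat{k}_x,\hat{k}_x\rangle|\le \langle B^*|M|^{2\alpha}B\hat{k}_x,\hat{k}_x\rangle^{1/2}\langle A^*|M^*|^{2(1-\alpha)}A\hat{k}_x,\hat{k}_x\rangle^{1/2}.
\]
Since $f$ is non-decreasing and multiplicative, we have $f(\sqrt{st})=f(\sqrt s)f(\sqrt t)$. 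Then $f(s)f(t)=f(st)$, and together with the AM--GM inequality this gives
\[
f(|\langle A^*MB\hat{k}_x,\hat{k}_x\rangle|)\le f\!\left(\tfrac{s+t}{2}\right)\le \tfrac12\big(f(s)+f(t)\big),
\]
where $s=\langle B^*|M|^{2\alpha}B\hat{k}_x,\hat{k}_x\rangle$ and $t=\langle A^*|M^*|^{2(1-\alpha)}A\hat{k}_x,\hat{k}_x\rangle$. The simplest route is to use AM--GM on $\sqrt{st}$ first and then convexity of $f$. The operators involved are positive, so Lemma \ref{vvvcc} applies and yields $f(s)\le\langle f(B^*|M|^{2\alpha}B)\hat{k}_x,\hat{k}_x\rangle$, and similarly for $t$. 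Hence
\[
a:=f(|\langle A^*MB\hat{k}_x,\hat{k}_x\rangle|)\le \tfrac12\big\langle (f(B^*|M|^{2\alpha}B)+f(A^*|M^*|^{2(1-\alpha)}A))\hat{k}_x,\hat{k}_x\big\rangle=:b .
\]
Because $\sigma_\mu$ is a mean, $a\le b$ implies $a\le a\,\sigma_\mu\, b$. Monotonicity of $\sigma_\mu$ in both arguments then lets us replace $a$ by $f(\textbf{ber}(A^*MB))$ and $b$ by $\frac12\|f(B^*|M|^{2\alpha}B)+f(A^*|M^*|^{2(1-\alpha)}A)\|_{\textbf{ber}}$. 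The same argument with $\beta$ and $\tau_\nu$ handles $C^*ND$.

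To combine, we use $|\langle \tfrac{A^*MB+C^*ND}{2}\hat{k}_x,\hat{k}_x\rangle|\le \tfrac12|\langle A^*MB\hat{k}_x,\hat{k}_x\rangle|+\tfrac12|\langle C^*ND\hat{k}_x,\hat{k}_x\rangle|$. Monotonicity and convexity of $f$ then give
\[
f\big(|\langle \tfrac{A^*MB+C^*ND}{2}\hat{k}_x,\hat{k}_x\rangle|\big)\le \tfrac12 f(|\langle A^*MB\hat{k}_x,\hat{k}_x\rangle|)+\tfrac12 f(|\langle C^*ND\hat{k}_x,\hat{k}_x\rangle|).
\]
Inserting the two mean bounds and taking the supremum over $x$ gives the statement; since $f$ is continuous and non-decreasing, $\sup f(|\cdot|)=f(\textbf{ber}(\cdot))$.

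The only delicate point is applying Jensen (Lemma \ref{vvvcc}) to $s$ and $t$. This requires $\hat{k}_x$ to be a unit vector and the operators to be self-adjoint with spectrum in a compact interval of $[0,\infty)$, where $f$ is convex. Both conditions hold, so no real obstacle is expected. The multiplicativity of $f$ is not strictly needed in the route above, since AM--GM followed by convexity suffices.
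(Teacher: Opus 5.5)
Your proposal is correct and follows essentially the same route as the paper: convexity of $f$ to split the two summands, the mixed Schwarz inequality, AM--GM followed by convexity of $f$, Jensen's inequality (Lemma \ref{vvvcc}), and finally $a\le a\,\sigma_\mu\, b$ together with monotonicity of the mean. The only difference is that the paper inserts $\sigma_\mu$ already at the stage $b=f(\sqrt{st})$ and then enlarges the second argument step by step, which amounts to the same argument. Your remark that multiplicativity of $f$ is never used is also accurate for the paper's proof.
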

\begin{proof}
Let $\hat{k}_x$ be a normalized reproducing kernel of $\mathcal{H}$. Then for convexity property of $f$, we have
\begin{equation}
f\left(\left|\left\langle\left(\frac{A^*MB+C^*ND}{2}\right)\hat{k}_x,\hat{k}_x\right\rangle\right|\right)
\leq\frac{1}{2}f\left(|\langle MB\hat{k}_x,A\hat{k}_x\rangle|\right)+\frac{1}{2}f\left(|\langle ND\hat{k}_x,C\hat{k}_x\rangle|\right).\label{Two1}   
\end{equation}
Now by Lemma \ref{txy^2}, we have for $0\leq\alpha\leq 1$, 
\begin{equation*}
  f\left(|\langle MB\hat{k}_x,A\hat{k}_x\rangle|\right)\\
 \leq f\left(\langle |M|^{2\alpha}B\hat{k}_x,B\hat{k}_x\rangle^{\frac{1}{2}}\langle |M^*|^{2(1-\alpha)}A\hat{k}_x,A\hat{k}_x\rangle^{\frac{1}{2}}\right).  
\end{equation*}
Since $a \leq a \sigma_{\mu} b \leq b$ for $0 \leq a \leq b$, choosing
\[a=f\left(|\langle MB\hat{k}_x,A\hat{k}_x\rangle|\right) \,\mbox{ and } \,b= f\left(\langle |M|^{2\alpha}B\hat{k}_x,B\hat{k}_x\rangle^{\frac{1}{2}}\langle |M^*|^{2(1-\alpha)}A\hat{k}_x,A\hat{k}_x\rangle^{\frac{1}{2}}\right),\] and using the monotone increasing property of $\sigma_\mu$, we deduce
\begin{eqnarray}
 &&f\left(|\langle MB\hat{k}_x,A\hat{k}_x\rangle|\right)\nonumber\\
 &\leq&\left(f\left(|\langle MB\hat{k}_x,A\hat{k}_x\rangle|\right)\sigma_\mu f\left(\langle |M|^{2\alpha}B\hat{k}_x,B\hat{k}_x\rangle^{\frac{1}{2}}\langle |M^*|^{2(1-\alpha)}A\hat{k}_x,A\hat{k}_x\rangle^{\frac{1}{2}}\right)\right)\nonumber\\
 &\leq&f\left(|\langle A^*MB\hat{k}_x,\hat{k}_x\rangle|\right)\sigma_\mu f\left(\frac{\langle |M|^{2\alpha}B\hat{k}_x,B\hat{k}_x\rangle+\langle |M^*|^{2(1-\alpha)}A\hat{k}_x,A\hat{k}_x\rangle}{2}\right)\nonumber\\
 &\leq&f\left(\textbf{ber}( A^*MB)\right)\sigma_\mu\frac{1}{2}\left(f(\langle B^*|M|^{2\alpha}B\hat{k}_x,\hat{k}_x\rangle)+f(\langle A^*|M^*|^{2(1-\alpha)}A\hat{k}_x,\hat{k}_x\rangle)\right)\nonumber\\
 &\leq&f\left(\textbf{ber}( A^*MB)\right)\sigma_\mu\frac{1}{2}\left(\langle(f(B^*|M|^{2\alpha}B)+f(A^*|M^*|^{2(1-\alpha)}A)\hat{k}_x,\hat{k}_x\rangle\right) ~~(\mbox{using Lemma \ref{vvvcc}})\nonumber\\
 &\leq&f\left(\textbf{ber}( A^*MB)\right)\sigma_\mu\frac{1}{2}\left\|f\left(B^*|M|^{2\alpha}B\right)+f\left(A^*|M^*|^{2(1-\alpha)}A\right)\right\|_{\textbf{ber}}\label{Two2}.
\end{eqnarray}
Similarly, for $0\leq \beta\leq1$, it follows 
\begin{equation}\label{Two3}
  f\left(|\langle ND\hat{k}_x,C\hat{k}_x\rangle|\right)\leq f\left(\textbf{ber}( C^*ND)\right)\tau_\nu\frac{1}{2}\left\|f\left(D^*|N|^{2\beta}D\right)+f\left(C^*|N^*|^{2(1-\beta)}C\right)\right\|_{\textbf{ber}}.  
\end{equation}
Therefore from equations \eqref{Two1}, \eqref{Two2} and \eqref{Two3}, we have
\begin{eqnarray*}
      &&f\left(\left|\left\langle\left(\frac{A^*MB+C^*ND}{2}\right)\hat{k}_x,\hat{k}_x\right\rangle\right|\right)\\
      &\le &\frac{1}{2}\left(f\left(\textbf{ber}( A^*MB)\right)\sigma_\mu\frac{1}{2}\left\|f(B^*|M|^{2\alpha}B)+f(A^*|M^*|^{2(1-\alpha)}A)\right\|_{\textbf{ber}}\right)\\
       &&+\frac{1}{2}\left(f(\textbf{ber}(C^*ND))\tau_\nu\frac{1}{2}\left\|f(D^*|N|^{2\beta}D)+f(C^*|N^*|^{2(1-\beta)}C)\right\|_{\textbf{ber}}\right).
    \end{eqnarray*}
Now, taking the supremum over all $x \in X$, we get our required result.
\end{proof}

\begin{cor}\label{nabami}
Let $A,B,C,D,M,N\in \mathscr{B}(\mathcal{H})$. Then for any $\alpha,\beta,\mu,\nu\in[0,1]$, the following inequality holds
    \begin{eqnarray*}
    &&\textbf{ber}^r\left(\frac{A^*MB+C^*ND}{2}\right)\\
    &\le &\frac{1}{2^{\mu+1}}\left(\textbf{ber}^{r(1-\mu)}( A^*MB)\left\|(B^*|M|^{2\alpha}B)^r+(A^*|M^*|^{2(1-\alpha)}A)^r\right\|^{\mu}_{\textbf{ber}}\right)\\
       &&+\frac{1}{2^{\nu+1}}\left(\textbf{ber}^{r(1-\nu)}(  C^*ND)\left\|(D^*|N|^{2\beta}D)^r+(C^*|N^*|^{2(1-\beta)}C)^r\right\|^{\nu}_{\textbf{ber}}\right).
    \end{eqnarray*}
\end{cor}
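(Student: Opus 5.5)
This corollary is a direct specialization of Theorem \ref{producf}. We take $f(t)=t^r$ with $r\geq 1$, and $\sigma_\mu=\#_\mu$, $\tau_\nu=\#_\nu$.

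First, $f(t)=t^r$ is admissible. It is continuous, convex for $r\ge1$, non-decreasing, and satisfies $f(0)=0$, $f(t)>0$ for $t>0$, and $f(t)\to\infty$. It is also multiplicative, since $(st)^r=s^rt^r$. The paths $\#_\mu$ and $\#_\nu$ are interpolational paths for the symmetric geometric mean $\#$. All hypotheses of Theorem \ref{producf} therefore hold. By functional calculus, $f(B^*|M|^{2\alpha}B)=(B^*|M|^{2\alpha}B)^r$, and similarly for the other three positive operators. Each of these operators is positive, so Lemma \ref{vvvcc} applied inside the theorem is legitimate, and this is already covered by the theorem.

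Next, we substitute. The left side of Theorem \ref{producf} becomes $\textbf{ber}^r\big(\frac{A^*MB+C^*ND}{2}\big)$. For the first term on the right we use $a\#_\mu b=a^{1-\mu}b^\mu$, which gives
\[
\textbf{ber}^{r(1-\mu)}(A^*MB)\Big(\tfrac12\big\|(B^*|M|^{2\alpha}B)^r+(A^*|M^*|^{2(1-\alpha)}A)^r\big\|_{\textbf{ber}}\Big)^{\mu}.
\]
We pull out $2^{-\mu}$ and multiply by the outer factor $\frac12$. This produces the coefficient $\frac{1}{2^{\mu+1}}$. The second term is handled the same way with $\nu$, $\beta$, $C,D,N$, and gives $\frac{1}{2^{\nu+1}}$.

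There is no real obstacle; the only care needed is in the bookkeeping of the powers of $2$. One could also note that $\#_\mu\le\nabla_\mu$ yields a weaker arithmetic-type variant, but that is not needed here.
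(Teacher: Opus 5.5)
Your proposal is correct and follows the paper's route exactly: the paper also obtains the corollary by substituting $f(t)=t^r$, $\sigma_\mu=\#_\mu$ and $\tau_\nu=\#_\nu$ into Theorem~\ref{producf}, and your computation $\frac{1}{2}\cdot\left(\frac{1}{2}\right)^{\mu}=\frac{1}{2^{\mu+1}}$ is right. You also state explicitly that $r\geq 1$ is needed for $t^r$ to be an Orlicz function, an assumption the corollary's statement leaves implicit.
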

\begin{proof}
The required result is obtained by taking $f(t)=t^r$, $t\ge 0$, and $ \sigma_\mu=\#_\mu,~\tau_\nu=\#_\nu$ in Theorem \ref{producf}. 
\end{proof}

\begin{remark} 
In \cite[Theorem 2.5(ii)]{hajmohamadi2020improvements} it is obtained that 
\begin{equation}
    \textbf{ber}(A^*MB)\leq \frac{1}{2}\left\|B^*|M|^{2\alpha}B+A^*|M^*|^{2(1-\alpha)}A\right\|_{\textbf{ber}}.\label{hh3}
\end{equation}
 In contrast,
for $A=C$, $B=D$, $M=N$, $\mu=\nu$ and $r=1$,  Corollary \ref{nabami} becomes
\begin{eqnarray}
       \textbf{ber}(A^*MB)\leq \frac{1}{2^{\mu}}\textbf{ber}^{(1-\mu)}( A^*MB)\left\|B^*|M|^{2\alpha}B+A^*|M^*|^{2(1-\alpha)}A\right\|^{\mu}_{\textbf{ber}}.\label{hh4}
\end{eqnarray}
It follows  that for any  choice of $\mu\in[0,1]$,
 inequality
(\ref{hh4}) provides a sharper bound than inequality (\ref{hh3}).
\end{remark}

We note the scalar inequality
\begin{eqnarray}\label{a+ib}
    |a+b|\le \sqrt{2}|a+ib|\,\,\,\mbox{where $a,b\in \mathbb{R}$}.
\end{eqnarray}

Recall that the interpolation path of the arithmetic mean is given by $a\nabla_\lambda b=(1-\lambda)a+\lambda b,~0\leq\lambda\leq 1$ for $0 \leq a \leq b$. Since $a \leq a \nabla_\lambda b \leq b$,  by taking
\[a=|\langle Au, v\rangle|^2 \,\mbox{ and } \,b= \langle|A|^{2\alpha}u,u\rangle\langle|A^*|^{2(1-\alpha)}v,v\rangle ,\]
 we obtain
\begin{equation}\label{meanwithalpha}
|\langle Au, v\rangle|^2\leq |\langle Au, v\rangle|^2 \nabla_\lambda\langle|A|^{2\alpha}u,u\rangle\langle|A^*|^{2(1-\alpha)}v,v\rangle, 
\end{equation}  which follows from Lemma \ref{txy^2}. The two inequalities mentioned above are used to derive the following result.

\begin{theorem}\label{ber2a+b}
Let $A,B\in \mathscr{B}(\mathcal{H})$ and $f_1$, $f_2$ be the complementary Orlicz functions corresponding to $g_1$, $g_2$, respectively. Then for any $\alpha,\beta,\lambda\in[0,1]$, the following inequality holds
    \begin{eqnarray*}
        \textbf{ber}^2(A+B)
        &\le&2 \Big(\left(\textbf{ber}^2 (A)+\textbf{ber}^2 (B)\right)\nabla_{\lambda}\\
        &&\sqrt{2}\,\textbf{ber} \left(f_1(|A|^{2\alpha})+f_2( |B|^{2\beta})+i\left(g_1(|A^*|^{2(1-\alpha)})+g_2(|B^*|^{2(1-\beta)})\right)\right)\Big).
    \end{eqnarray*}
\end{theorem}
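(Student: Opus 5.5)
Fix a normalized reproducing kernel $\hat{k}_x$, establish the inequality pointwise at $x$, and take the supremum over $x\in X$ at the end.

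\emph{Step 1 (splitting the sum).} By the triangle inequality and $(s+t)^2\le 2(s^2+t^2)$,
\[
|\langle (A+B)\hat{k}_x,\hat{k}_x\rangle|^2\le 2\left(|\langle A\hat{k}_x,\hat{k}_x\rangle|^2+|\langle B\hat{k}_x,\hat{k}_x\rangle|^2\right).
\]

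\emph{Step 2 (applying \eqref{meanwithalpha}).} Apply \eqref{meanwithalpha} to $A$ with parameter $\alpha$, and to $B$ with parameter $\beta$, taking $u=v=\hat{k}_x$. Since $\nabla_\lambda$ is affine, the two resulting bounds add, and the right-hand side of Step 1 is at most
\[
2\Big(\big(|\tilde A(x)|^2+|\tilde B(x)|^2\big)\nabla_\lambda \big(\langle|A|^{2\alpha}\hat{k}_x,\hat{k}_x\rangle\langle|A^*|^{2(1-\alpha)}\hat{k}_x,\hat{k}_x\rangle+\langle|B|^{2\beta}\hat{k}_x,\hat{k}_x\rangle\langle|B^*|^{2(1-\beta)}\hat{k}_x,\hat{k}_x\rangle\big)\Big).
\]
The first slot is bounded by $\textbf{ber}^2(A)+\textbf{ber}^2(B)$, and $\nabla_\lambda$ is monotone in each slot.

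\emph{Step 3 (bounding the second slot).} Apply Lemma \ref{orlizineq} to each product: $ab\le f_1(a)+g_1(b)$ with $a=\langle|A|^{2\alpha}\hat{k}_x,\hat{k}_x\rangle$ and $b=\langle|A^*|^{2(1-\alpha)}\hat{k}_x,\hat{k}_x\rangle$, and similarly for $B$ with $f_2,g_2$. Then use Lemma \ref{vvvcc} (Jensen; Orlicz functions are convex and continuous) to obtain $f_1(\langle P\hat{k}_x,\hat{k}_x\rangle)\le\langle f_1(P)\hat{k}_x,\hat{k}_x\rangle$, and likewise for the other three terms. This gives the bound $a'+b'$, where
\[
a'=\langle (f_1(|A|^{2\alpha})+f_2(|B|^{2\beta}))\hat{k}_x,\hat{k}_x\rangle,\qquad b'=\langle (g_1(|A^*|^{2(1-\alpha)})+g_2(|B^*|^{2(1-\beta)}))\hat{k}_x,\hat{k}_x\rangle.
\]
Both $a'$ and $b'$ are real and nonnegative.

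\emph{Step 4 (combining via \eqref{a+ib}).} By \eqref{a+ib}, $a'+b'\le\sqrt2\,|a'+ib'|$. Here $a'+ib'$ is exactly the Berezin transform at $x$ of the operator $f_1(|A|^{2\alpha})+f_2(|B|^{2\beta})+i(g_1(\cdots)+g_2(\cdots))$, so it is bounded by the Berezin number appearing in the statement. Monotonicity of $\nabla_\lambda$ then yields the pointwise inequality, and taking the supremum completes the proof.

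\emph{Main obstacle.} The delicate point is Step 3: applying Jensen requires $f_i,g_i$ to be convex on the spectra of the positive operators, which holds because they are Orlicz functions. One must also make sure all monotonicity steps are applied to the slots of $\nabla_\lambda$ consistently, and only after summing the $A$ and $B$ terms. Everything else is routine.
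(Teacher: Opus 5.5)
Your proposal is correct and follows the paper's proof essentially step for step. The sequence is the same: the triangle inequality with $(s+t)^2\le 2(s^2+t^2)$, then \eqref{meanwithalpha} for $A$ and $B$ combined through the affineness of $\nabla_\lambda$, then Lemma \ref{orlizineq}, Jensen via Lemma \ref{vvvcc}, inequality \eqref{a+ib}, and finally the supremum over $x\in X$.
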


\begin{proof} Let $\hat{k}_x$ be a normalized reproducing kernel of $\mathcal{H}$. Then we have
    \begin{eqnarray*}
        &&|\langle(A+B)\hat{k}_x,\hat{k}_x\rangle|^2\\
        &\le& \left(|\langle A\hat{k}_x,\hat{k}_x\rangle|+|\langle B\hat{k}_x,\hat{k}_x\rangle|\right)^2\\
        &\le&2 \left(|\langle A\hat{k}_x,\hat{k}_x\rangle|^2+|\langle B\hat{k}_x,\hat{k}_x\rangle|^2\right)\\
        &\le&2 \Big(|\langle A\hat{k}_x,\hat{k}_x\rangle|^2\nabla_{\lambda}\langle |A|^{2\alpha}\hat{k}_x,\hat{k}_x\rangle\langle |A^*|^{2(1-\alpha)}\hat{k}_x,\hat{k}_x\rangle\\
        &&+\big|\langle B\hat{k}_x,\hat{k}_x\rangle\big|^2\nabla_{\lambda}\langle |B|^{2\beta}\hat{k}_x,\hat{k}_x\rangle\langle |B^*|^{2(1-\beta)}\hat{k}_x,\hat{k}_x\rangle\Big)~~(\mbox{using \eqref{meanwithalpha}})\\
        &=&2\bigg(\left(|\langle A\hat{k}_x,\hat{k}_x\rangle|^2+|\langle B\hat{k}_x,\hat{k}_x\rangle|^2\right)\nabla_{\lambda}\Big(\langle |A|^{2\alpha}\hat{k}_x,\hat{k}_x\rangle\langle |A^*|^{2(1-\alpha)}\hat{k}_x,\hat{k}_x\rangle\\
        &&+\langle |B|^{2\beta}\hat{k}_x,\hat{k}_x\rangle\langle |B^*|^{2(1-\beta)}\hat{k}_x,\hat{k}_x\rangle\Big)\bigg)\\
        &\le&2 \bigg(\Big(\textbf{ber}^2 (A)+\textbf{ber}^2 (B)\Big)\nabla_{\lambda}\Big(f_1(\langle |A|^{2\alpha}\hat{k}_x,\hat{k}_x\rangle)+g_1(\langle |A^*|^{2(1-\alpha)}\hat{k}_x,\hat{k}_x\rangle)\\
        &&+f_2(\langle |B|^{2\beta}\hat{k}_x,\hat{k}_x\rangle)+g_2(\langle |B^*|^{2(1-\beta)}\hat{k}_x,\hat{k}_x\rangle)\Big)\bigg)~~(\mbox{using Lemma \ref{orlizineq}})\\
        &\le& 2\bigg(\Big(\textbf{ber}^2 (A)+\textbf{ber}^2 (B)\Big)\nabla_{\lambda}\Big(\langle f_1(|A|^{2\alpha})\hat{k}_x,\hat{k}_x\rangle+\langle f_2( |B|^{2\beta})\hat{k}_x,\hat{k}_x\rangle\\
        &&+\langle g_1(|A^*|^{2(1-\alpha)})\hat{k}_x,\hat{k}_x\rangle+
        \langle   g_2(|B^*|^{2(1-\beta)})\hat{k}_x,\hat{k}_x\rangle\Big)\Big)~~(\mbox{using Lemma \ref{vvvcc}})\\
        &\le&2 \bigg(\Big(\textbf{ber}^2 (A)+\textbf{ber}^2 (B)\Big)\nabla_{\lambda}\sqrt{2}\Big|\big\langle (f_1(|A|^{2\alpha})+f_2(|B|^{2\beta})\\
        &&+i(g_1(|A^*|^{2(1-\alpha)})+g_2(|B^*|^{2(1-\beta)})))\hat{k}_x,\hat{k}_x\big\rangle\Big|\bigg)~(\mbox{using \eqref{a+ib}})\\
&\le&2 \Big(\left(\textbf{ber}^2 (A)+\textbf{ber}^2 (B)\right)\nabla_{\lambda}\\
&&\sqrt{2}\,\textbf{ber} \left(f_1(|A|^{2\alpha})+f_2( |B|^{2\beta})+i(g_1(|A^*|^{2(1-\alpha)})+g_2(|B^*|^{2(1-\beta)})\big)\right)\Big).\end{eqnarray*}
Now, taking the supremum over all $x \in X$, we get our required result.
\end{proof}

Taking $f_1(t)=g_1(t)=f_2(t)=g_2(t)=\frac{t^2}{2}$ and $\alpha=\beta=\frac{1}{2}$ in Theorem \ref{ber2a+b}, we get the following result.
\begin{cor}\label{dasomi}
    Let $A,B\in \mathscr{B}(\mathcal{H})$. Then for any $\lambda\in[0,1]$, the following inequality holds
    \begin{eqnarray*}
        &&\textbf{ber}^2(A+B)\\
        &\le&2(1-\lambda)\left(\textbf{ber}^2 (A)+\textbf{ber}^2 (B)\right)+\sqrt{2}\lambda\,\textbf{ber} \left(|A|^{2}+|B|^{2}+i(|A^*|^{2}+|B^*|^{2})\right).
    \end{eqnarray*}
\end{cor}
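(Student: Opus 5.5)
Corollary \ref{dasomi} is a direct specialization of Theorem \ref{ber2a+b}, and I will obtain it exactly as the sentence before the statement suggests. I take $f_1(t)=g_1(t)=f_2(t)=g_2(t)=\frac{t^2}{2}$ and $\alpha=\beta=\frac12$.

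The first step is to check that this choice is legitimate. The function $\frac{t^2}{2}$ is continuous, convex and non-decreasing on $[0,\infty)$. It vanishes at $0$, tends to infinity, and is positive for $t>0$, so it is a non-degenerate Orlicz function. Its kernel is $p(t)=t$, whose right inverse is $q(s)=s$. Hence the complementary function is $g(v)=\int_0^v s\,ds=\frac{v^2}{2}$, so $(f_i,g_i)$ is a mutually complementary pair as Theorem \ref{ber2a+b} requires. (Lemma \ref{orlizineq} then reduces to the AM--GM inequality $ab\le \frac{a^2}{2}+\frac{b^2}{2}$.)

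The second step is to substitute. With $\alpha=\beta=\frac12$ we have $|A|^{2\alpha}=|A|$ and $|A^*|^{2(1-\alpha)}=|A^*|$, and similarly for $B$. Therefore
\[
f_1(|A|)=\tfrac12|A|^2,\quad g_1(|A^*|)=\tfrac12|A^*|^2,\quad f_2(|B|)=\tfrac12|B|^2,\quad g_2(|B^*|)=\tfrac12|B^*|^2 .
\]
The operator inside the Berezin number in Theorem \ref{ber2a+b} is then $\frac12\big(|A|^2+|B|^2+i(|A^*|^2+|B^*|^2)\big)$.

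The third step is to simplify. By homogeneity, $\textbf{ber}(\tfrac12 T)=\tfrac12\textbf{ber}(T)$. Expanding $a\nabla_\lambda b=(1-\lambda)a+\lambda b$, the right-hand side of Theorem \ref{ber2a+b} becomes
\[
2(1-\lambda)\big(\textbf{ber}^2(A)+\textbf{ber}^2(B)\big)+2\lambda\cdot\sqrt2\cdot\tfrac12\,\textbf{ber}\big(|A|^2+|B|^2+i(|A^*|^2+|B^*|^2)\big),
\]
and the last term equals $\sqrt2\lambda\,\textbf{ber}\big(|A|^2+|B|^2+i(|A^*|^2+|B^*|^2)\big)$, which is exactly the stated bound.

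There is no real obstacle here. The only points needing care are verifying the complementarity of $\frac{t^2}{2}$ with itself and correctly tracking the constants $2$, $\sqrt2$ and $\frac12$.
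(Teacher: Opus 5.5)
Your proposal is correct and follows exactly the paper's route: specialize Theorem~\ref{ber2a+b} with $f_1=g_1=f_2=g_2=\frac{t^2}{2}$ and $\alpha=\beta=\frac12$. You also verify that this function is self-complementary, with kernel $p(t)=t$, and then use homogeneity of $\textbf{ber}$ to cancel the factor $\frac12$ against the outer $2$, correctly arriving at $2(1-\lambda)$ and $2\lambda\cdot\sqrt{2}\cdot\frac12=\sqrt{2}\lambda$.
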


\begin{remark}
     From \cite[Corollary 2.19(ii)]{pintuacta}, we have 
\begin{eqnarray}
   \textbf{ber}^2(A+B)\le\sqrt{2}\,\textbf{ber} \left(|A|^{2}+|B|^{2}+i(|A^*|^{2}+|B^*|^{2})\right).\label{pp12}
\end{eqnarray}
  For for $\lambda=1$, $\mathcal{H}=\mathbb{C}^2$ and the operators $A=\begin{bmatrix}
      1& 0\\
      1& 0
  \end{bmatrix}$, $B=\begin{bmatrix}
      2& 2\\
      0& 0
  \end{bmatrix}$, the inequality (\ref{pp12}) yields the  bound  $\textbf{ber}^2(A+B)\le\sqrt{234}$ whereas Corollary \ref{dasomi} gives
  \[\textbf{ber}^2(A+B)\le (1-\lambda)10+ \lambda\sqrt{ 234}.\]
 This expression depends on a positive parameter $\lambda$ and for any choice of $\lambda\in[0,1],$
  we
 obtain $ (1-\lambda)10+ \lambda\sqrt{ 234}\le \sqrt{ 234}$. This means that Corollary \ref{dasomi} provides a significant refinement
 over the estimate obtained from inequality (\ref{pp12}).
\end{remark}

\section{\textbf{Some improvements for Berezin number using refined Young's inequalities}}\label{sec4}
 In this section, we derive several refined Berezin number inequalities for bounded linear operators on reproducing kernel Hilbert spaces using refined Young's inequalities.  
\begin{theorem}\label{cow*}
 Let $A\in \mathscr{B}(\mathcal{H})$ and $p,q\geq 1$ such that $\frac{1}{p}+\frac{1}{q}=1$. Then, for $r\geq2$ and $\zeta\geq 0$, the following inequality holds:
\begin{eqnarray*}
        \textbf{ber}^{2r}(A)&\leq&\frac{2\zeta+1}{2\zeta+2}\bigg(\frac{\||A|^{pr}\|_\textbf{ber}}{p^2}+\frac{\||A^*|^{qr}\|_\textbf{ber}}{q^2}\\
        &&+\frac{\||A|^{2p(r-1)}\|_\textbf{ber}^\frac{1}{2}\||A^*|^{2q}\|_\textbf{ber}^\frac{1}{2}+\||A|^{2p}\|_\textbf{ber}^\frac{1}{2}\||A^*|^{2q(r-1)}\|_\textbf{ber}^\frac{1}{2}}{pq}\bigg)+\frac{\textbf{ber}^{r}(A^2)}{2\zeta+2}.
    \end{eqnarray*}
\end{theorem}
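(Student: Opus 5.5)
The plan is to work pointwise at a normalized reproducing kernel $\hat{k}_x$, in the spirit of the proof of Theorem \ref{ber4S*T}. We combine Lemma \ref{estimation1} (with exponent $r$), the refined Young inequality of Lemma \ref{ab^r}, and the power inequalities of Lemma \ref{<Ax,x>^r}. At the end we take the supremum over $x\in X$.

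\emph{Step 1.} Write
\[
|\langle A\hat{k}_x,\hat{k}_x\rangle|^{2r}=|\langle A\hat{k}_x,\hat{k}_x\rangle\langle \hat{k}_x,A^*\hat{k}_x\rangle|^r .
\]
Apply Lemma \ref{estimation1} with $u=A\hat{k}_x$, $v=A^*\hat{k}_x$, $e=\hat{k}_x$; this is allowed since $r\ge 2\ge 1$. It gives
\[
|\langle A\hat{k}_x,\hat{k}_x\rangle|^{2r}\le \frac{2\zeta+1}{2\zeta+2}\|A\hat{k}_x\|^r\|A^*\hat{k}_x\|^r+\frac{1}{2\zeta+2}|\langle A\hat{k}_x,A^*\hat{k}_x\rangle|^r .
\]
The last term equals $|\langle A^2\hat{k}_x,\hat{k}_x\rangle|^r\le \textbf{ber}^r(A^2)$, which is already the final term of the statement.

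\emph{Step 2.} Put $a=\|A\hat{k}_x\|=\langle |A|^2\hat{k}_x,\hat{k}_x\rangle^{1/2}$ and $b=\|A^*\hat{k}_x\|=\langle |A^*|^2\hat{k}_x,\hat{k}_x\rangle^{1/2}$. Lemma \ref{ab^r} (using $r\ge2$) then gives
\[
(ab)^r\le \frac{a^{pr}}{p^2}+\frac{b^{qr}}{q^2}+\frac{1}{pq}\left(a^{p(r-1)}b^q+a^pb^{q(r-1)}\right).
\]

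\emph{Step 3.} Convert each power of $a$ and $b$ into a Berezin-norm quantity using Lemma \ref{<Ax,x>^r}(1). Every exponent involved is at least $1$ because $p,q\ge1$ and $r\ge2$.
\begin{itemize}
\item Since $pr/2\ge1$, we get $a^{pr}=\langle |A|^2\hat{k}_x,\hat{k}_x\rangle^{pr/2}\le\langle |A|^{pr}\hat{k}_x,\hat{k}_x\rangle\le\||A|^{pr}\|_{\textbf{ber}}$. The same argument gives $b^{qr}\le \||A^*|^{qr}\|_{\textbf{ber}}$.
\item For the cross terms, write $a^{p(r-1)}=\left(\langle |A|^2\hat{k}_x,\hat{k}_x\rangle^{p(r-1)}\right)^{1/2}$. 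Since $p(r-1)\ge1$, this is at most $\langle |A|^{2p(r-1)}\hat{k}_x,\hat{k}_x\rangle^{1/2}$.
\item Likewise $b^q=\left(\langle |A^*|^2\hat{k}_x,\hat{k}_x\rangle^{q}\right)^{1/2}\le \langle |A^*|^{2q}\hat{k}_x,\hat{k}_x\rangle^{1/2}$.
\item The second cross term $a^pb^{q(r-1)}$ is handled the same way.
\end{itemize}
Each resulting quantity $\langle P\hat{k}_x,\hat{k}_x\rangle$ with $P\ge0$ is bounded by $\|P\|_{\textbf{ber}}$.

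\emph{Step 4.} Substitute Steps 2 and 3 into Step 1 and take the supremum over $x\in X$; this yields the stated bound.

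The only delicate point is the choice of exponent splitting in the cross terms. The square-root form $\||A|^{2p(r-1)}\|^{1/2}_{\textbf{ber}}$ in the statement suggests writing $a^{p(r-1)}$ as the square root of $\langle|A|^2\hat{k}_x,\hat{k}_x\rangle^{p(r-1)}$, and this is exactly what makes every Jensen exponent at least $1$ under the hypotheses $p,q\ge1$, $r\ge2$. The boundary case $p=1$ or $q=1$ is degenerate for conjugate exponents and can be read as a limit, or as the case where the corresponding terms vanish.
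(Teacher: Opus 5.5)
Your proposal is correct and follows the paper's proof step for step: Lemma \ref{estimation1} with $u=A\hat{k}_x$, $v=A^*\hat{k}_x$, $e=\hat{k}_x$; then Lemma \ref{ab^r} with $a=\langle |A|^2\hat{k}_x,\hat{k}_x\rangle^{1/2}$ and $b=\langle |A^*|^2\hat{k}_x,\hat{k}_x\rangle^{1/2}$; then Lemma \ref{<Ax,x>^r} with the same square-root splitting of the cross terms; and finally the supremum over $x\in X$. Your closing remark about a boundary case is unnecessary, because $\frac{1}{p}+\frac{1}{q}=1$ with finite $p,q\geq 1$ already forces $p,q>1$.
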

\begin{proof}
Let $\hat{k}_x$ be a normalized reproducing kernel of $\mathcal{H}$. Then 
    \begin{eqnarray*}
        &&|\langle A\hat{k}_x,\hat{k}_x\rangle|^{2r}\\
        &=&\left(|\langle A\hat{k}_x,\hat{k}_x\rangle||\langle \hat{k}_x,A^*\hat{k}_x\rangle|\right)^r\\
        &\leq&\frac{2\zeta+1}{2\zeta+2}\langle A\hat{k}_x,A\hat{k}_x\rangle^\frac{r}{2}\langle A^*\hat{k}_x,A^*\hat{k}_x\rangle^\frac{r}{2}+\frac{1}{2\zeta+2}|\langle A\hat{k}_x,A^*\hat{k}_x\rangle|^r~~(\mbox{using Lemma \ref{estimation1}})\\
        &=&\frac{2\zeta+1}{2\zeta+2}\left(\langle |A|^2\hat{k}_x,\hat{k}_x\rangle^\frac{1}{2}\langle |A^*|^2\hat{k}_x,\hat{k}_x\rangle^\frac{1}{2}\right)^r+\frac{1}{2\zeta+2}\left(|\langle A^2\hat{k}_x,\hat{k}_x\rangle|\right)^r\\
       &\leq&\frac{2\zeta+1}{2\zeta+2}\bigg(\frac{\langle |A|^2\hat{k}_x,\hat{k}_x\rangle^\frac{pr}{2}}{p^2}+\frac{\langle |A^*|^2\hat{k}_x,\hat{k}_x\rangle^\frac{qr}{2}}{q^2}\\
       &&+\frac{\langle |A|^2\hat{k}_x,\hat{k}_x\rangle^\frac{p(r-1)}{2}\langle |A^*|^2\hat{k}_x,\hat{k}_x\rangle^\frac{q}{2}+\langle |A|^2\hat{k}_x,\hat{k}_x\rangle^\frac{p}{2}\langle |A^*|^2\hat{k}_x,\hat{k}_x\rangle^\frac{q(r-1)}{2}}{pq}\bigg)\\
       &&+\frac{1}{2\zeta+2}(|\langle A^2\hat{k}_x,\hat{k}_x\rangle|^{r})~~(\mbox{using Lemma \ref{ab^r}})\\
       &\leq&\frac{2\zeta+1}{2\zeta+2}\bigg(\frac{\langle |A|^{pr}\hat{k}_x,\hat{k}_x\rangle}{p^2}+\frac{\langle |A^*|^{qr}\hat{k}_x,\hat{k}_x\rangle}{q^2}\\
       &&+\frac{1}{pq}\left(\langle |A|^{2p(r-1)}\hat{k}_x,\hat{k}_x\rangle^\frac{1}{2}\langle |A^*|^{2q}\hat{k}_x,\hat{k}_x\rangle^\frac{1}{2}+\langle |A|^{2p}\hat{k}_x,\hat{k}_x\rangle^\frac{1}{2}\langle |A^*|^{2q(r-1)}\hat{k}_x,\hat{k}_x\rangle^\frac{1}{2}\right)\bigg)\\
       &&+\frac{|\langle A^2\hat{k}_x,\hat{k}_x\rangle|^{r}}{2\zeta+2}~~(\mbox{using Lemma \ref{<Ax,x>^r}})\\
       &\leq&\frac{2\zeta+1}{2\zeta+2}\bigg(\frac{\||A|^{pr}\|_\textbf{ber}}{p^2}+\frac{\||A^*|^{qr}\|_\textbf{ber}}{q^2}+\frac{\||A|^{2p(r-1)}\|_\textbf{ber}^\frac{1}{2}\||A^*|^{2q}\|_\textbf{ber}^\frac{1}{2}+\||A|^{2p}\|_\textbf{ber}^\frac{1}{2}\||A^*|^{2q(r-1)}\|_\textbf{ber}^\frac{1}{2}}{pq}\bigg)\\
       &&+\frac{1}{2\zeta+2}\textbf{ber}^{r}(A^2).
    \end{eqnarray*}
Taking the supremum over $x\in X$, we get our required result.
\end{proof}

Taking $p=q=2$ in Theorem \ref{cow*}, we obtain the following result.
\begin{cor}\label{corcow1}
   Let $A\in \mathscr{B}(\mathcal{H})$. Then, 
    \begin{eqnarray*}
        \textbf{ber}^{2r}(A)&\leq&\frac{2\zeta+1}{2\zeta+2}\bigg(\frac{\||A|^{2r}\|_\textbf{ber}}{4}+\frac{\||A^*|^{2r}\|_\textbf{ber}}{4}\\
        &&+\frac{\||A|^{4(r-1)}\|_\textbf{ber}^\frac{1}{2}\||A^*|^{4}\|_\textbf{ber}^\frac{1}{2}+\||A|^{4}\|_\textbf{ber}^\frac{1}{2}\||A^*|^{4(r-1)}\|_\textbf{ber}^\frac{1}{2}}{4}\bigg)+\frac{1}{2\zeta+2}\textbf{ber}^{r}(A^2),
    \end{eqnarray*}
where $r\geq2$ and $\zeta\geq 0$.
\end{cor}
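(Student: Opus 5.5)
This is the case $p=q=2$ of Theorem \ref{cow*}, so the plan is to substitute directly rather than re-derive anything. With $p=q=2$ we have $\frac1p+\frac1q=1$ and $p,q\ge 1$, so the hypotheses of Theorem \ref{cow*} hold for every $r\ge 2$ and $\zeta\ge 0$.

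First, simplify the exponents and coefficients:
\begin{itemize}
\item $|A|^{pr}=|A|^{2r}$ and $|A^*|^{qr}=|A^*|^{2r}$;
\item $1/p^2=1/q^2=1/4$ and $1/(pq)=1/4$;
\item $|A|^{2p(r-1)}=|A|^{4(r-1)}$ and $|A^*|^{2q}=|A^*|^{4}$;
\item $|A|^{2p}=|A|^{4}$ and $|A^*|^{2q(r-1)}=|A^*|^{4(r-1)}$.
\end{itemize}
The last term $\frac{1}{2\zeta+2}\textbf{ber}^r(A^2)$ is unchanged. Putting these into the right-hand side of Theorem \ref{cow*} gives exactly the displayed bound of Corollary \ref{corcow1}.

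No step is really an obstacle, since the corollary is a pure specialization. The only point to confirm is that the endpoint value $p=q=2$ is admissible in Lemma \ref{ab^r}, which was used in proving Theorem \ref{cow*}. It is: that lemma allows any $p,q\ge1$ with $\frac1p+\frac1q=1$.

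As a sanity check, I would also verify directly the key scalar step with $p=q=2$. Set $a^2=\langle|A|^2\hat k_x,\hat k_x\rangle$ and $b^2=\langle|A^*|^2\hat k_x,\hat k_x\rangle$. Then
\[
(ab)^r\le\Big(\tfrac{a^2+b^2}{2}\Big)^r\le \tfrac{a^{2r}+b^{2r}}{4}+\tfrac{a^{2(r-1)}b^2+a^2b^{2(r-1)}}{4},
\]
and the proof of Theorem \ref{cow*} then proceeds unchanged.
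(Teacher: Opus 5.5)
Your proposal is correct and matches the paper's argument exactly: the corollary is Theorem \ref{cow*} with $p=q=2$, and your substitutions ($1/p^2=1/q^2=1/(pq)=1/4$, $|A|^{pr}=|A|^{2r}$, $|A|^{2p(r-1)}=|A|^{4(r-1)}$, and so on) reproduce the stated bound term by term. One small wording point: $p=q=2$ is the symmetric case ($1/p=1/q=1/2$), not an endpoint of the admissible range, so Lemma \ref{ab^r} applies there with nothing special to check.
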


\begin{remark}
According to \cite[Theorem 2.5]{esti}, it follows that 
\begin{equation}\label{cow12*} 
      \textbf{ber}^{2s}(A)\leq \frac{2\zeta+1}{2\zeta+2}\left\|\frac{|A|^{2s}}{2}+\frac{|A^*|^{2s}}{2}\right\|_\textbf{ber}+\frac{1}{2\zeta+2}\textbf{ber}^s(A^2),
\end{equation} where $s\ge 1$.
 Now, we consider the matrix $A=\begin{pmatrix}
     1 & 1\\
     0 & 0
 \end{pmatrix}$ and let
$S=2$. Applying inequality \eqref{cow12*}, we obtain $$\textbf{ber}^4(A)\le \frac{2\zeta+1}{2\zeta+2}6 +\frac{1}{2\zeta+2}.$$ 
Moreover, when $r=2$, Corollary \ref{corcow1} yields 
 $$\textbf{ber}^4(A)\le \frac{2\zeta+1}{2\zeta+2}2.914 +\frac{1}{2\zeta+2}.$$
Therefore, Corollary \ref{corcow1} provides a sharper bound than that obtained in \cite[Theorem 2.5]{esti}.
\end{remark}

\begin{theorem}\label{temp}
 Let  $A\in\mathscr{B}(\mathcal{H})$ and $p,q\geq 1$ such that $\frac{1}{p}+\frac{1}{q}=1$. Then, for $r\geq2$,
 \begin{eqnarray*}
   \textbf{ber}^r(A)\leq\left\|\frac{|A|^\frac{pr}{2}}{p}+\frac{|A^*|^\frac{qr}{2}}{q}\right\|_\textbf{ber}-\min\left\{\frac{1}{p},\frac{1}{q}\right\}\inf_{x\in X}\left(\langle|A|\hat{k}_x,\hat{k}_x\rangle^\frac{pr}{4}-\langle|A^*|\hat{k}_x,\hat{k}_x\rangle^\frac{qr}{4}\right)^2.
 \end{eqnarray*}
\end{theorem}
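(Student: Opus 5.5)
Fix a normalized reproducing kernel $\hat{k}_x$. The starting point is the mixed Schwarz inequality (Lemma \ref{txy^2}) with $\alpha=\frac12$, $u=v=\hat{k}_x$:
\[
|\langle A\hat{k}_x,\hat{k}_x\rangle|\le \langle|A|\hat{k}_x,\hat{k}_x\rangle^{\frac12}\langle|A^*|\hat{k}_x,\hat{k}_x\rangle^{\frac12}.
\]
Raising to the power $r$ gives
\[
|\langle A\hat{k}_x,\hat{k}_x\rangle|^r\le \langle|A|\hat{k}_x,\hat{k}_x\rangle^{\frac r2}\langle|A^*|\hat{k}_x,\hat{k}_x\rangle^{\frac r2}.
\]
Set $a=\langle|A|\hat{k}_x,\hat{k}_x\rangle^{\frac r2}$ and $b=\langle|A^*|\hat{k}_x,\hat{k}_x\rangle^{\frac r2}$. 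The refined Young inequality \eqref{youngmody} then yields
\[
ab\le \frac{a^p}{p}+\frac{b^q}{q}-\min\left\{\frac1p,\frac1q\right\}\left(a^{\frac p2}-b^{\frac q2}\right)^2 .
\]
In terms of the original quantities, $a^{p/2}=\langle|A|\hat{k}_x,\hat{k}_x\rangle^{pr/4}$ and $b^{q/2}=\langle|A^*|\hat{k}_x,\hat{k}_x\rangle^{qr/4}$. So the subtracted term is exactly the squared difference appearing in the statement.

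Next I handle the leading terms $a^p/p+b^q/q$, which equal $\frac1p\langle|A|\hat{k}_x,\hat{k}_x\rangle^{pr/2}+\frac1q\langle|A^*|\hat{k}_x,\hat{k}_x\rangle^{qr/2}$. Since $r\ge2$ and $p,q\ge1$, the exponents $pr/2$ and $qr/2$ are at least $1$. Lemma \ref{<Ax,x>^r}(1), or Lemma \ref{vvvcc} with the convex function $t^{s}$, gives
\[
\langle|A|\hat{k}_x,\hat{k}_x\rangle^{pr/2}\le\langle|A|^{pr/2}\hat{k}_x,\hat{k}_x\rangle,
\]
and likewise for $|A^*|$. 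Hence
\[
|\langle A\hat{k}_x,\hat{k}_x\rangle|^r\le \left\langle\left(\frac{|A|^{pr/2}}{p}+\frac{|A^*|^{qr/2}}{q}\right)\hat{k}_x,\hat{k}_x\right\rangle-\min\left\{\frac1p,\frac1q\right\}\left(\langle|A|\hat{k}_x,\hat{k}_x\rangle^{\frac{pr}{4}}-\langle|A^*|\hat{k}_x,\hat{k}_x\rangle^{\frac{qr}{4}}\right)^2.
\]

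Finally I pass to the supremum over $x\in X$. The first term is the Berezin transform of a positive operator, so it is bounded by its Berezin number. For positive operators this equals the Berezin norm $\|\cdot\|_{\textbf{ber}}$. The subtracted term is at least its infimum over $X$, so subtracting the infimum gives an upper bound for each $x$. Taking the supremum of the left side then gives $\textbf{ber}^r(A)$ bounded by the right-hand side of the statement.

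The only delicate point is justifying the power step, i.e. that the exponents $pr/2$ and $qr/2$ are at least $1$; this is exactly where the hypothesis $r\ge2$ is used. The remaining care is bookkeeping of exponents when matching $a^{p/2},b^{q/2}$ with the powers $pr/4,qr/4$. The refinement term must be kept pointwise and only afterwards replaced by its infimum, never bounded above before the supremum is taken.
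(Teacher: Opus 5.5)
Your proposal is correct and takes the same approach as the paper's proof. The steps match in order: the mixed Schwarz inequality with $\alpha=\frac12$; then the refined Young inequality \eqref{youngmody} with $a=\langle|A|\hat{k}_x,\hat{k}_x\rangle^{r/2}$ and $b=\langle|A^*|\hat{k}_x,\hat{k}_x\rangle^{r/2}$; then the power inequality $\langle T\hat{k}_x,\hat{k}_x\rangle^{s}\le\langle T^{s}\hat{k}_x,\hat{k}_x\rangle$ for $s=\frac{pr}{2},\frac{qr}{2}\ge 1$; and finally bounding by the Berezin norm and replacing the refinement term by its infimum before taking the supremum.
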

\begin{proof} Let $\hat{k}_x$ be a normalized reproducing kernel of $\mathcal{H}$. Then, from Lemma \ref{txy^2} with $\alpha=\frac{1}{2}$, we have
   \begin{eqnarray*}
          |\langle A\hat{k}_x, \hat{k}_x\rangle|^r&\leq& \langle|A|\hat{k}_x,\hat{k}_x\rangle^\frac{r}{2}\langle|A^*|\hat{k}_x,\hat{k}_x\rangle^\frac{r}{2}\\ 
          &\leq&\frac{1}{p}\langle|A|\hat{k}_x,\hat{k}_x\rangle^\frac{pr}{2}+\frac{1}{q}\langle|A^*|\hat{k}_x,\hat{k}_x\rangle^\frac{qr}{2}-\min\left\{\frac{1}{p},\frac{1}{q}\right\}\left(\langle|A|\hat{k}_x,\hat{k}_x\rangle^\frac{pr}{4}-\langle|A^*|\hat{k}_x,\hat{k}_x\rangle^\frac{qr}{4}\right)^2\\
          &&(\mbox{using \eqref{youngmody}})\\
         &\leq&\frac{1}{p}\langle|A|^\frac{pr}{2}\hat{k}_x,\hat{k}_x\rangle+\frac{1}{q}\langle|A^*|^\frac{qr}{2}\hat{k}_x,\hat{k}_x\rangle-\min\left\{\frac{1}{p},\frac{1}{q}\right\}\left(\langle|A|\hat{k}_x,\hat{k}_x\rangle^\frac{pr}{4}-\langle|A^*|\hat{k}_x,\hat{k}_x\rangle^\frac{qr}{4}\right)^2\\
         &&(\mbox{using Lemma \ref{<Ax,x>^r}})\\
         &=&\left\langle\left(\frac{|A|^\frac{pr}{2}}{p}+\frac{|A^*|^\frac{qr}{2}}{q}\right)\hat{k}_x,\hat{k}_x\right\rangle-\min\left\{\frac{1}{p},\frac{1}{q}\right\}\left(\langle|A|\hat{k}_x,\hat{k}_x\rangle^\frac{pr}{4}-\langle|A^*|\hat{k}_x,\hat{k}_x\rangle^\frac{qr}{4}\right)^2\\
         &\leq&\left\|\frac{|A|^\frac{pr}{2}}{p}+\frac{|A^*|^\frac{qr}{2}}{q}\right\|_\textbf{ber}-\min\left\{\frac{1}{p},\frac{1}{q}\right\}\inf_{x\in X}\left(\langle|A|\hat{k}_x,\hat{k}_x\rangle^\frac{pr}{4}-\langle|A^*|\hat{k}_x,\hat{k}_x\rangle^\frac{qr}{4}\right)^2.
    \end{eqnarray*}
     Taking the supremum over $x\in X$, we get our desired result.
\end{proof}

 The following corollary is derived by setting $p=q=2$ in Theorem \ref{temp}.
\begin{cor}\label{infcor} 
 Let  $A\in\mathscr{B}(\mathcal{H})$. Then, for $r\geq2$,
    \begin{eqnarray*}
   \textbf{ber}^r(A)\leq\frac{1}{2}\left\||A|^r+|A^*|^r\right\|_\textbf{ber}-\frac{1}{2}\inf_{x\in X}\left(\langle|A|\hat{k}_x,\hat{k}_x\rangle^\frac{r}{2}-\langle|A^*|\hat{k}_x,\hat{k}_x\rangle^\frac{r}{2}\right)^2.
 \end{eqnarray*}
\end{cor}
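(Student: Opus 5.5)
The plan is to specialize Theorem \ref{temp} to $p=q=2$ and simplify each term. With $p=q=2$ we have $\frac{1}{p}+\frac{1}{q}=1$ and $\min\left\{\frac{1}{p},\frac{1}{q}\right\}=\frac12$. The exponents become $\frac{pr}{2}=\frac{qr}{2}=r$ and $\frac{pr}{4}=\frac{qr}{4}=\frac{r}{2}$. Since $r\geq 2$ is the same standing hypothesis as in Theorem \ref{temp}, that result applies directly. It gives
\[
\textbf{ber}^r(A)\leq\left\|\frac{|A|^{r}}{2}+\frac{|A^*|^{r}}{2}\right\|_\textbf{ber}-\frac12\inf_{x\in X}\left(\langle|A|\hat{k}_x,\hat{k}_x\rangle^\frac{r}{2}-\langle|A^*|\hat{k}_x,\hat{k}_x\rangle^\frac{r}{2}\right)^2 .
\]

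The remaining step is to pull the factor $\frac12$ out of the Berezin norm. By definition, $\|\cdot\|_\textbf{ber}$ is a supremum of $|\langle T\hat{k}_x,\hat{k}_y\rangle|$, so it is absolutely homogeneous: $\|cT\|_\textbf{ber}=|c|\,\|T\|_\textbf{ber}$. Hence
\[
\left\|\frac{|A|^{r}+|A^*|^{r}}{2}\right\|_\textbf{ber}=\frac12\left\||A|^r+|A^*|^r\right\|_\textbf{ber},
\]
and substituting this gives exactly the stated inequality.

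There is no real obstacle, since the argument is only a substitution plus homogeneity. The one point to check is that $p=q=2$ is admissible, which requires $p,q\geq 1$ and $\frac1p+\frac1q=1$; both hold. It also helps to recall why Theorem \ref{temp} itself holds in this case. It uses inequality \eqref{youngmody} with $a=\langle|A|\hat{k}_x,\hat{k}_x\rangle^{r/2}$ and $b=\langle|A^*|\hat{k}_x,\hat{k}_x\rangle^{r/2}$, together with Lemma \ref{<Ax,x>^r}(1) applied with exponent $r\geq 1$. Both remain valid at $p=q=2$, so nothing extra needs to be verified.
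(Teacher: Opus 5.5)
Your proposal is correct and follows the paper exactly: the paper obtains this corollary by setting $p=q=2$ in Theorem \ref{temp}. Your explicit use of the homogeneity $\left\|\tfrac12 T\right\|_{\textbf{ber}}=\tfrac12\|T\|_{\textbf{ber}}$ is the only simplification needed to reach the stated form.
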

\begin{remark}
  In \cite[Corollary $3.5$]{Taghavi} it is given that, for $s\geq1$,
  $$\textbf{ber}^s(A)\leq\frac{1}{2}\textbf{ber}\left(|A|^s+|A^*|^s\right).$$ 
  Since $\inf_{x\in X}\left(\langle|A|\hat{k}_x,\hat{k}_x\rangle^\frac{r}{2}-\langle|A^*|\hat{k}_x,\hat{k}_x\rangle^\frac{r}{2}\right)^2\geq 0$, so Corollary \ref{infcor} gives better bound for $r\geq 2$.
\end{remark}

\begin{theorem}\label{goat}
 Let $A,B\in \mathscr{B}(\mathcal{H})$. Then 
\begin{eqnarray*}
    \textbf{ber}^{2r}(A^*B)&\leq&\frac{2\zeta+1}{2\zeta+2}\left\|\frac{|B|^{4m}+|A|^{4m}}{2}\right\|_\textbf{ber}^\frac{r}{m}-\frac{2\zeta+1}{2^r(2\zeta+2)}\inf_{x\in X}\left(\langle |B|^4\hat{k}_x,\hat{k}_x\rangle^\frac{r}{2}-\langle |A|^4\hat{k}_x,\hat{k}_x\rangle^\frac{r}{2}\right)^2\\
    &&+\frac{1}{2\zeta+2}\textbf{ber}^r(|A|^2|B|^2),
    \end{eqnarray*}
    where $r\in\mathbb{N}$, $m\geq1$  and $\zeta\geq0$.
\end{theorem}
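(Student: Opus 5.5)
Fix a normalized reproducing kernel $\hat{k}_x$. I will follow the pattern of the proof of Theorem \ref{ber4S*T}, but use power functions and inequality \eqref{a1/2b1/2} in place of general means.

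First, Cauchy--Schwarz gives
\[
|\langle A^*B\hat{k}_x,\hat{k}_x\rangle|^{2r}=|\langle B\hat{k}_x,A\hat{k}_x\rangle|^{2r}\le \langle |B|^2\hat{k}_x,\hat{k}_x\rangle^r\langle |A|^2\hat{k}_x,\hat{k}_x\rangle^r .
\]
I rewrite the right side as $|\langle |B|^2\hat{k}_x,\hat{k}_x\rangle\langle \hat{k}_x,|A|^2\hat{k}_x\rangle|^r$. Lemma \ref{estimation1}, applied with $u=|B|^2\hat{k}_x$, $v=|A|^2\hat{k}_x$ and $e=\hat{k}_x$, then bounds it by
\[
\frac{2\zeta+1}{2\zeta+2}\,\big\||B|^2\hat{k}_x\big\|^r\big\||A|^2\hat{k}_x\big\|^r+\frac{1}{2\zeta+2}\,\big|\langle |A|^2|B|^2\hat{k}_x,\hat{k}_x\rangle\big|^r .
\]
Here $\||B|^2\hat{k}_x\|^2=\langle |B|^4\hat{k}_x,\hat{k}_x\rangle$. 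The second term is at most $\frac{1}{2\zeta+2}\textbf{ber}^r(|A|^2|B|^2)$.

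For the first term, set $a=\langle |B|^4\hat{k}_x,\hat{k}_x\rangle$ and $b=\langle |A|^4\hat{k}_x,\hat{k}_x\rangle$, so the term equals $(a^{1/2}b^{1/2})^r$. Since $r\in\mathbb{N}$, I apply \eqref{a1/2b1/2} with exponent $r$ in place of $m$ and power $m$ in place of $r$. This is legitimate because the theorem's $m\ge1$ plays the role of the Young power there. It yields
\[
(a^{1/2}b^{1/2})^r\le 2^{-r/m}(a^m+b^m)^{r/m}-\frac{1}{2^r}\big(a^{r/2}-b^{r/2}\big)^2 .
\]
By Lemma \ref{<Ax,x>^r}(1) with $m\ge1$, I have $a^m\le\langle |B|^{4m}\hat{k}_x,\hat{k}_x\rangle$ and $b^m\le\langle |A|^{4m}\hat{k}_x,\hat{k}_x\rangle$. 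Hence
\[
2^{-r/m}(a^m+b^m)^{r/m}\le\left\langle \frac{|B|^{4m}+|A|^{4m}}{2}\hat{k}_x,\hat{k}_x\right\rangle^{r/m}\le\left\|\frac{|B|^{4m}+|A|^{4m}}{2}\right\|_{\textbf{ber}}^{r/m}.
\]
The subtracted square is bounded below by its infimum over $x\in X$. Multiplying by $\frac{2\zeta+1}{2\zeta+2}$ produces the coefficient $\frac{2\zeta+1}{2^r(2\zeta+2)}$ in front of that infimum. Combining the two terms and taking the supremum over $x$ gives the claim.

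The main point needing care is the role swap in \eqref{a1/2b1/2}. Its integer exponent must be the theorem's $r$, and its real power $\ge1$ must be the theorem's $m$. This matches the hypotheses $r\in\mathbb{N}$ and $m\ge1$, and it matches the form of the stated bound.
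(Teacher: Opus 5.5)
Your proof is correct and follows the paper's argument essentially step for step. Like the paper, you use the Cauchy--Schwarz inequality, then Lemma \ref{estimation1} with $u=|B|^2\hat{k}_x$, $v=|A|^2\hat{k}_x$, $e=\hat{k}_x$, then \eqref{a1/2b1/2} with the roles of $r$ and $m$ interchanged (as the paper does implicitly), then Lemma \ref{<Ax,x>^r}(1), and finally the supremum over $x\in X$.
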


\begin{proof}
Let $\hat{k}_x$ be a normalized reproducing kernel of $\mathcal{H}$. Then
    \begin{eqnarray*}
    &&|\langle A^*B\hat{k}_x,\hat{k}_x\rangle|^{2r}\\
    &=&|\langle B\hat{k}_x,A\hat{k}_x\rangle|^{2r}\\
    &\leq&\left(\langle B\hat{k}_x,B\hat{k}_x\rangle\langle A\hat{k}_x,A\hat{k}_x\rangle\right)^r~~(\mbox{by the Cauchy Schwarz inequality})\\
    &=&\left(|\langle |B|^2\hat{k}_x,\hat{k}_x\rangle||\langle |A|^2\hat{k}_x,\hat{k}_x\rangle|\right)^r\\
    &=&\left(|\langle |B|^2\hat{k}_x,\hat{k}_x\rangle||\langle \hat{k}_x,|A|^2\hat{k}_x\rangle|\right)^r\\
    &\leq&\frac{2\zeta+1}{2\zeta+2}\langle |B|^2\hat{k}_x,|B|^2\hat{k}_x\rangle^\frac{r}{2}\langle |A|^2\hat{k}_x,|A|^2\hat{k}_x\rangle^\frac{r}{2}+\frac{1}{2\zeta+2}|\langle |B|^2\hat{k}_x,|A|^2\hat{k}_x\rangle|^r~~(\mbox{using Lemma \ref{estimation1}})\\
    &=&\frac{2\zeta+1}{2\zeta+2}\langle |B|^4\hat{k}_x,\hat{k}_x\rangle^\frac{r}{2}\langle |A|^4\hat{k}_x,\hat{k}_x\rangle^\frac{r}{2}+\frac{1}{2\zeta+2}|\langle |A|^2|B|^2\hat{k}_x,\hat{k}_x\rangle|^r\\
    &\leq&\frac{2\zeta+1}{2\zeta+2}\left(\frac{\langle |B|^4\hat{k}_x,\hat{k}_x\rangle^m+\langle |A|^4\hat{k}_x,\hat{k}_x\rangle^m}{2}\right)^\frac{r}{m}-\frac{2\zeta+1}{2^r(2\zeta+2)}\left(\langle |B|^4\hat{k}_x,\hat{k}_x\rangle^\frac{r}{2}-\langle |A|^4\hat{k}_x,\hat{k}_x\rangle^\frac{r}{2}\right)^2\\
    &&+\frac{1}{2\zeta+2}|\langle |A|^2|B|^2\hat{k}_x,\hat{k}_x\rangle|^r~~(\mbox {using \eqref{a1/2b1/2}})\\
    &\leq&\frac{2\zeta+1}{2\zeta+2}\left(\frac{\langle |B|^{4m}\hat{k}_x,\hat{k}_x\rangle+\langle |A|^{4m}\hat{k}_x,\hat{k}_x\rangle}{2}\right)^\frac{r}{m}-\frac{2\zeta+1}{2^r(2\zeta+2)}\left(\langle |B|^4\hat{k}_x,\hat{k}_x\rangle^\frac{r}{2}-\langle |A|^4\hat{k}_x,\hat{k}_x\rangle^\frac{r}{2}\right)^2\\
    &&+\frac{1}{2\zeta+2}|\langle |A|^2|B|^2\hat{k}_x,\hat{k}_x\rangle|^r~~(\mbox{using Lemma \ref{<Ax,x>^r}})\\
    &\leq&\frac{2\zeta+1}{2\zeta+2}\left\|\frac{|B|^{4m}+|A|^{4m}}{2}\right\|_\textbf{ber}^\frac{r}{m}-\frac{2\zeta+1}{2^r(2\zeta+2)}\inf_{x\in X}\left(\langle |B|^4\hat{k}_x,\hat{k}_x\rangle^\frac{r}{2}-\langle |A|^4\hat{k}_x,\hat{k}_x\rangle^\frac{r}{2}\right)^2\\
    &&+\frac{1}{2\zeta+2}\textbf{ber}^r(|A|^2|B|^2).
    \end{eqnarray*}
     Taking the supremum over $x\in X$, we obtain our desired result.
\end{proof}

Now, by taking $r=m=1$ in Theorem \ref{goat}, we obtain the following result.
\begin{cor}\label{cora*b}
     Let $A,B\in \mathscr{B}(\mathcal{H})$. Then, for $\zeta\geq0$,
    \begin{eqnarray*}
        \textbf{ber}^2(A^*B)&\leq&\frac{2\zeta+1}{4\zeta+4}\left\||B|^4+|A|^4\right\|_\textbf{ber}-\frac{2\zeta+1}{4\zeta+4}\inf_{x\in X}\left(\langle |B|^4\hat{k}_x,\hat{k}_x\rangle^\frac{1}{2}-\langle |A|^4\hat{k}_x,\hat{k}_x\rangle^\frac{1}{2}\right)^2\\
        &&+\frac{1}{2\zeta+2}\textbf{ber}\left(|A|^2|B|^2\right).
    \end{eqnarray*}
\end{cor}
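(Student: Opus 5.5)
This is the case $r=m=1$ of Theorem \ref{goat}, so the plan is to specialize that theorem and check the constants. Alternatively, one can rerun its proof directly for $r=m=1$, which I outline so that each step is transparent.

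Fix a normalized reproducing kernel $\hat{k}_x$. First, the Cauchy--Schwarz inequality gives
\[
|\langle A^*B\hat{k}_x,\hat{k}_x\rangle|^2=|\langle B\hat{k}_x,A\hat{k}_x\rangle|^2\le \langle |B|^2\hat{k}_x,\hat{k}_x\rangle\langle \hat{k}_x,|A|^2\hat{k}_x\rangle.
\]
Next, apply Lemma \ref{estimation1} with $r=1$, $u=|B|^2\hat{k}_x$, $v=|A|^2\hat{k}_x$ and $e=\hat{k}_x$. This bounds the right side by
\[
\frac{2\zeta+1}{2\zeta+2}\langle |B|^4\hat{k}_x,\hat{k}_x\rangle^{1/2}\langle |A|^4\hat{k}_x,\hat{k}_x\rangle^{1/2}+\frac{1}{2\zeta+2}|\langle |A|^2|B|^2\hat{k}_x,\hat{k}_x\rangle|.
\]
Then apply the refined AM--GM inequality \eqref{a1/2b1/2} with $m=1$, $r=1$ to $a=\langle |B|^4\hat{k}_x,\hat{k}_x\rangle$ and $b=\langle |A|^4\hat{k}_x,\hat{k}_x\rangle$. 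In this case it is just the identity
\[
\sqrt{ab}=\frac{a+b}{2}-\frac12(\sqrt a-\sqrt b)^2 .
\]
Multiplying the $\frac12$ by $\frac{2\zeta+1}{2\zeta+2}$ produces the coefficient $\frac{2\zeta+1}{4\zeta+4}$ in front of both the sum term and the subtracted square. This matches the statement, since $\frac{2\zeta+1}{2^r(2\zeta+2)}$ with $r=1$ equals $\frac{2\zeta+1}{4\zeta+4}$.

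Finally, bound $\langle(|B|^4+|A|^4)\hat{k}_x,\hat{k}_x\rangle\le\||B|^4+|A|^4\|_{\textbf{ber}}$. Bound the subtracted square below by its infimum over $X$, which is legitimate because it enters with a minus sign. Bound $|\langle |A|^2|B|^2\hat{k}_x,\hat{k}_x\rangle|\le \textbf{ber}(|A|^2|B|^2)$. Taking the supremum over $x\in X$ then gives the corollary. In the $m=1$ case Lemma \ref{<Ax,x>^r} is not needed.

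The only point requiring care is the bookkeeping of constants. One must confirm that the factor $\frac12$ in $\|\frac{|B|^4+|A|^4}{2}\|_{\textbf{ber}}$ combines with $\frac{2\zeta+1}{2\zeta+2}$ to give $\frac{2\zeta+1}{4\zeta+4}$. One must also confirm that the exponent $\frac r2=\frac12$ on the kernel terms yields the square roots appearing in the infimum. No genuine obstacle is expected.
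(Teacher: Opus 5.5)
Your proposal is correct and matches the paper: the corollary is the case $r=m=1$ of Theorem~\ref{goat}, and your direct rerun (Cauchy--Schwarz, Lemma~\ref{estimation1} with exponent $1$, the identity case of \eqref{a1/2b1/2}, then passing to the Berezin norm, the infimum, and $\textbf{ber}(|A|^2|B|^2)$) reproduces that theorem's proof. You also track the constant $\frac{2\zeta+1}{4\zeta+4}$ correctly, and you rightly note that Lemma~\ref{<Ax,x>^r} becomes trivial when $m=1$.
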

\begin{remark}
    In \cite[Theorem 2.17]{esti} it was obtained that 
   \begin{equation}\label{goat1}
        \textbf{ber}^2(A^*B)\leq\frac{1}{2|\lambda|}\max\{1,|1-\lambda|\}\left\| |B|^4+|A|^4\right\|_\textbf{ber}+\frac{1}{|\lambda|}\textbf{ber}\left(|A|^2|B|^2\right).
    \end{equation}
    In particular, when $\lambda=2$,  
   inequality \eqref{goat1} becomes 
  \begin{equation}\label{goat2}
        \textbf{ber}^2(A^*B)\leq\frac{1}{4}\left\| |B|^4+|A|^4\right\|_\textbf{ber}+\frac{1}{2}\textbf{ber}\left(|A|^2|B|^2\right).
    \end{equation}
    Since $\frac{1}{4}\inf_{x\in X}\left(\langle |B|^4\hat{k}_x,\hat{k}_x\rangle^\frac{1}{2}-\langle |A|^4\hat{k}_x,\hat{k}_x\rangle^\frac{1}{2}\right)^2\geq 0$, so for $\zeta=0$, Corollary \ref{cora*b} provides an improvement of the inequality \eqref{goat2}.
\end{remark}
\section{Conclusion}
\textit{By incorporating interpolation paths of symmetric means and Orlicz functions, together with refined versions of Young’s inequality, we have established several new upper bounds for the Berezin number of bounded linear operators on reproducing kernel Hilbert spaces. The inclusion of these structures, along with the associated Berezin number and Berezin norm inequalities, enables us to obtain estimates that are sharper than many of the existing general bounds.
These findings point to several fruitful directions for future investigation.}\\

\textbf{Acknowledgements.} 
Mr. Sumon Ghosh would like to thank the UGC, Govt. of India, for the financial support in the form of a fellowship. Mr. Saikat Mahapatra would like to thank the UGC, Govt.
of India, for financial support in the form of a fellowship.\\

\textbf{Author Contributions:} All the authors have contributed equally to this research article.\\
 \textbf{Conflict of interest:} There is no competing interest.\\
 \textbf{Availability of data:}    Not applicable. \\

\bibliographystyle{amsplain}

\end{document}